\newtheorem{remark}[theorem]{Remark}
\begin{document}

\title{Supports of Invariant Measures for Piecewise Deterministic Markov Processes }
\author{M. Bena\"{\i}m\\Institut de Math\'{e}matiques, Universit\'{e} de Neuch\^{a}tel, Neuch\^{a}tel, Suisse
\and F. Colonius and R. Lettau\\Institut f\"{u}r Mathematik, Universit\"{a}t Augsburg, Augsburg, Germany}
\maketitle

\begin{center}
\today

\end{center}

\bigskip\textbf{Abstract. }For a class of piecewise deterministic Markov
processes, the supports of the invariant measures are characterized. This is
based on the analysis of controllability properties of an associated
deterministic control system. Its invariant control sets determine the supports.

\begin{center}

\end{center}

\section{Introduction\label{Section1}}

In this paper we determine the supports of invariant measures for certain
Piecewise Deterministic Markov Processes (PDMP) using controllability
properties of an associated deterministic control system. We refer to the
monograph Davis \cite{Davis} for background on PDMP. The results extend some
of those given by Bakhtin and Hurth \cite{BH12} and Bena\"{\i}m, Le Borgne,
Malrieu and Zitt \cite{BBMZ15}, where the ergodic case is treated. We will
show that, under appropriate assumptions, the supports of the invariant
measures are determined by the invariant control sets. In particular, in the
ergodic case this reduces to one of the main results in \cite{BBMZ15} (in
particular Proposition 3.17).

A technical difference to the papers mentioned above is that, on the
deterministic side, we use control systems instead of differential inclusions.
This is due to the fact, that we bring to bear the theory of control sets
(maximal sets of complete approximate controllability) for control systems
(cf. Colonius and Kliemann \cite{ColK00}). This allows us to develop many
results in analogy to the theory for degenerate Markov diffusion processes
(cf. Arnold and Kliemann \cite{ArnoK83, ArnoK87}, Kliemann \cite{Klie87},
Colonius, Gayer, Kliemann \cite{ColGK08}) and for certain random
diffeomorphisms (Colonius, Homburg, Kliemann \cite{CHK10}).

The contents of this paper is as follows: In Section \ref{Section2} we recall
and partially strengthen some results on invariant control sets. Section
\ref{Section3} clarifies the relations between PDMP and the associated control
system, and Section \ref{Section4} establishes the relation between the
supports of invariant measures for PDMP and invariant control sets.

\textbf{Notation.} For a subset $V\subset\mathbb{R}^{d}$ the convex hull is
denoted by $\mathrm{co}(V)$. The topological closure and the interior of $V$
are denoted by $\mathrm{cl}V$ and $\mathrm{int}V$, respectively. For subsets
$A\subset V$, the closure relative to $V$ is denoted by $\mathrm{cl}_{V}A$.
Since all considered measures are probability measures, we just speak of
measures. We write $L^{\infty}(\mathbb{R}_{+},V)$ for the set of $v\in
L^{\infty}(\mathbb{R}_{+},\mathbb{R}^{d})$ such that $v(t)\in V$ for all
$t\geq0.$

\section{Controllability properties\label{Section2}}

In this section we associate deterministic control systems to PDMP and discuss
their controllability properties.

As in Bena\"{\i}m, Le Borgne, Malrieu and Zitt \cite{BBMZ15} we consider PDMP
of the following form: Let $E$ be a finite set with cardinality $m+1=\#E$, say
$E:=\{0,1,\ldots,m\}$, and for any $i\in E$ let $F^{i}:\mathbb{R}%
^{d}\rightarrow\mathbb{R}^{d}$ be a smooth ($C^{\infty}$) vector field. We
assume throughout that each $F^{i}$ is bounded, hence the (semi-)flow given by
the corresponding solution map $\Phi^{i}(t,x),t\geq0$, exists globally.
Frequently, we will also suppose that there exists a compact set
$M\subset\mathbb{R}^{d}$ that is positively invariant under each $\Phi^{i}$,
meaning that $\Phi^{i}(t,x)\in M$ for all $t\geq0$ and all $x\in M$.

We will consider a continuous-time piecewise deterministic Markov process
$Z_{t}=(X_{t},Y_{t})$ living on $\mathbb{R}^{d}\times E$. This process will be
described explicitly below, here we only remark that the continuous component
$X_{t}$ evolves according to the flows $\Phi^{i}$; the component on $E$
determines which of the flows $\Phi^{i}$ is active (with random switching
times). Already here it is clear that, in a natural way, one may associate the
following deterministic control system to the PDMP,%
\begin{equation}
\dot{x}(t)=\sum_{i=0}^{m}v_{i}(t)F^{i}(x), \label{CS}%
\end{equation}
where the (piecewise constant or measurable) control functions $v$ lie in
$L^{\infty}(\mathbb{R}_{+},S)$ and
\begin{equation}
S:=\left\{  v=(v_{i})\in\mathbb{R}^{m+1}\left\vert \sum\nolimits_{i=0}%
^{m}v_{i}=1\text{ and }v_{i}\in\{0,1\}\text{ for all }i\right.  \right\}
\label{S}%
\end{equation}
stands for the canonical basis of $\mathbb{R}^{m+1}.$ Thus only one vector
field $F^{i}$ is active at any time.

We also consider system (\ref{CS}) with convexified right hand side where the
control range is the unit $m$-simplex
\begin{equation}
\mathrm{co}(S):=\left\{  v\in\mathbb{R}^{m+1}\left\vert \sum\nolimits_{i=0}%
^{m}v_{i}=1\text{ and }v_{i}\in\lbrack0,1]\right.  \right\}  . \label{coE}%
\end{equation}
The corresponding set of control functions is $L^{\infty}(\mathbb{R}%
_{+},\mathrm{co}(S)).$

\begin{remark}
We may write $v_{0}(t)=1-v_{1}(t)-\cdots-v_{m}(t)$. Then system (\ref{CS})
with control range $S$ is equivalent to the following control system%
\begin{equation}
\dot{x}=F^{0}(x)+\sum_{i=1}^{m}v_{i}(t)\left[  F^{i}(x)-F^{0}(x)\right]
\label{CS2}%
\end{equation}
with controls having range in%
\begin{equation}
\left\{  v\in\mathbb{R}^{m}\left\vert \sum\nolimits_{i=1}^{m}v_{i}\leq1\text{
and }v_{i}\in\{0,1\}\text{ for all }i\right.  \right\}  . \label{U}%
\end{equation}
Analogously, (\ref{CS}) with control range $\mathrm{co}(S)$ is equivalent to
(\ref{CS2}) with convex control range%
\begin{equation}
\left\{  v\in\mathbb{R}^{m}\left\vert \sum\nolimits_{i=1}^{m}v_{i}\leq1\text{
and }v_{i}\in\lbrack0,1]\text{ for all }i\right.  \right\}  . \label{coU}%
\end{equation}

\end{remark}

System (\ref{CS2}) (and hence (\ref{CS})) is a special case of control-affine
systems of the form%
\begin{equation}
\dot{x}=f_{0}(x)+\sum_{i=1}^{m}v_{i}(t)f_{i}(x),(v_{i})\in\mathcal{V}%
:=L^{\infty}(\mathbb{R}_{+},V) \label{controlaffine}%
\end{equation}
with Lipschitz continuous vector fields $f_{i}$ on $\mathbb{R}^{d}$ and
compact control range $V\subset\mathbb{R}^{m}$. Next we discuss some
properties of the general class of systems of the form (\ref{controlaffine}).
We assume that (unique) global solutions $\varphi(t,x,v),t\geq0$, exist for
controls $v$ and initial condition $\varphi(0,x,v)=x$. This certainly holds in
a compact set $M$ which is positively invariant, i.e., satisfying%
\[
\varphi(t,x,v)\in M\text{ for all }x\in M,v\in\mathcal{V}\text{ and }t\geq0.
\]
We will call a subset $M$ of $\mathbb{R}^{d}$ invariant if%
\[
\{\varphi(t,x,v)\left\vert x\in M\text{ and }v\in\mathcal{V}\right.
\}=M\text{ for all }t\geq0.
\]
Define for $x\in\mathbb{R}^{d}$ and $T>0$ the reachable and controllable sets
of (\ref{controlaffine}) up to time $T$ by%
\begin{align}
\mathcal{O}_{\leq T}^{+}(x)  &  :=\{\varphi(t,x,v)\left\vert t\in
\lbrack0,T]\text{ and }v\in\mathcal{V}\right.  \},\nonumber\\
\mathcal{O}_{\leq T}^{-}(x)  &  :=\{y\left\vert x=\varphi(t,y,v)\text{ for
some }t\in\lbrack0,T]\text{ and }v\in\mathcal{V}\right.  \}, \label{orbits}%
\end{align}
and the reachable and controllable sets by
\[
\mathcal{O}^{+}(x):=\bigcup\nolimits_{T>0}\mathcal{O}_{\leq T}^{+}%
(x),\mathcal{O}^{-}(x):=\bigcup\nolimits_{T>0}\mathcal{O}_{\leq T}^{-}(x).
\]
Similarly let $\mathcal{O}_{pc}^{+}(x)$ denote the subset of $\mathcal{O}%
^{+}(x)$ which can be reached by piecewise constant control functions (i.e.,
having only finitely many discontinuities on every bounded interval), and
analogously for the other notions introduced above.

We note the following standard properties of control systems.

\begin{theorem}
\label{Theorem_approx}Consider a control system of the form
(\ref{controlaffine}). Then the following holds:

(i) For every trajectory $\varphi(t,x,v),t\in\lbrack0,T],$ of
(\ref{controlaffine}) there exists a sequence $(v_{n})$ of piecewise constant
controls with $\varphi(t,x,v_{n})\rightarrow\varphi(t,x,v)$ uniformly for
$t\in\lbrack0,T]$.

(ii) For every trajectory $\varphi(t,x,v),t\in\lbrack0,T],$ of
(\ref{controlaffine}) with control values in $\mathrm{co}(V)$ there exists a
sequence $(v_{n})$ of controls with values in $V$ such that $\varphi
(t,x,v_{n})\rightarrow\varphi(t,x,v)$, uniformly for $t\in\lbrack0,T]$.

(iii) The trajectories $\varphi(t,x,v)$ of (\ref{controlaffine}) with control
values in $\mathrm{co}(V)$ coincide with the (absolutely continuous) solutions
of the differential inclusion%
\begin{equation}
\dot{x}\in\left\{  f_{0}(x)+\sum\nolimits_{i=1}^{m}v_{i}f_{i}(x)\left\vert
(v_{i})\in\mathrm{co}(V)\right.  \right\}  . \label{DI}%
\end{equation}

\end{theorem}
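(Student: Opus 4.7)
The plan is to handle the three statements in the order given, exploiting throughout the control-affine structure of (\ref{controlaffine}): since the right hand side depends linearly on $v$ and Lipschitz continuously on $x$ (uniformly on compact sets containing the relevant trajectories), appropriate convergence of controls will translate into uniform convergence of trajectories via Gronwall's lemma.

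For part (i), fix $x$, $T$, and $v\in L^{\infty}([0,T],V)$. Since $V$ is a compact subset of $\mathbb{R}^{m}$, for each $n$ cover $V$ by finitely many balls of radius $1/n$ with centers in $V$, and then approximate $v$ pointwise almost everywhere by simple functions with values in $V$; approximating the level sets by finite unions of intervals produces piecewise constant $V$-valued controls $v_{n}$ with $v_{n}\to v$ almost everywhere and $\|v_{n}\|_{\infty}\leq \|v\|_{\infty}$. Bounded pointwise convergence together with Lipschitz continuity of the $f_{i}$ on a ball containing all trajectories yields, via dominated convergence and Gronwall, that $\varphi(\cdot,x,v_{n})\to\varphi(\cdot,x,v)$ uniformly on $[0,T]$.

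For part (ii), I would invoke the classical chattering construction. Given $v\in L^{\infty}([0,T],\mathrm{co}(V))$, partition $[0,T]$ into $n$ equal subintervals $I_{k}^{n}$, replace $v$ on $I_{k}^{n}$ by its mean $\bar{v}_{k}^{n}\in\mathrm{co}(V)$, and use Carath\'eodory's theorem to write each $\bar{v}_{k}^{n}$ as a convex combination $\sum_{j}\lambda_{kj}^{n}w_{kj}^{n}$ of at most $m+1$ points of $V$. Subdividing $I_{k}^{n}$ proportionally to the $\lambda_{kj}^{n}$ and setting $v_{n}$ equal to $w_{kj}^{n}$ on the $j$-th subpiece yields a piecewise constant $V$-valued control whose running integral differs from that of $v$ by $O(1/n)$. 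Hence $v_{n}\to v$ in the weak-$\ast$ topology of $L^{\infty}([0,T],\mathbb{R}^{m})$, and the control-affine structure propagates this to uniform trajectory convergence: a direct Gronwall estimate on $\varphi(t,x,v_{n})-\varphi(t,x,v)$ shows that the difference is controlled by $\sup_{t\leq T}\bigl|\int_{0}^{t}(v_{n}(s)-v(s))\,f_{i}(\varphi(s,x,v))\,ds\bigr|$, which tends to zero by weak-$\ast$ convergence tested against the continuous function $s\mapsto f_{i}(\varphi(s,x,v))$. Verifying this continuous dependence under weak-$\ast$ convergence is the main delicate step.

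For part (iii), the inclusion of control trajectories into solutions of (\ref{DI}) is immediate by construction. For the converse, given an absolutely continuous $x:[0,T]\to\mathbb{R}^{d}$ with $\dot{x}(t)\in\{f_{0}(x(t))+\sum_{i=1}^{m}v_{i}f_{i}(x(t))\mid v\in\mathrm{co}(V)\}$ for almost every $t$, the task is to produce a measurable selection $v(\cdot)\in\mathrm{co}(V)$ realizing $\dot{x}(t)$. This is the substantive obstacle; I would apply Filippov's implicit function lemma (or, equivalently, the Kuratowski--Ryll-Nardzewski selection theorem) to the set-valued map $t\mapsto\{v\in\mathrm{co}(V)\mid \dot{x}(t)=f_{0}(x(t))+\sum_{i}v_{i}f_{i}(x(t))\}$, which is nonempty-valued by hypothesis and closed-valued, with measurable graph because $\dot{x}$ is measurable and $x,f_{0},\dots,f_{m}$ are continuous. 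The resulting measurable selection $v$ lies in $\mathcal{V}$ and verifies $\varphi(t,x(0),v)=x(t)$ on $[0,T]$, closing the equivalence.
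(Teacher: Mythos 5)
Your proposal is correct and takes essentially the same route as the paper, whose proof consists of citations: Sontag's lemma on approximation by piecewise constant controls for (i), the relaxation (chattering) theorem of Berkovitz and Medhin for (ii), and a Filippov-type measurable selection theorem from Aubin and Frankowska for (iii) -- your sketches reproduce precisely the standard proofs of these cited results, combined with the same Gronwall argument. The details you leave implicit (passing to a subsequence to upgrade convergence in measure to a.e.\ convergence in (i), upgrading pointwise to uniform convergence of the running integrals in (ii), and handling the exceptional null set in the selection in (iii)) are routine.
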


\begin{proof}
For assertion (i) see Sontag \cite[Lemma 2.8.2]{Sont98}. For assertion (ii)
see, e.g., Berkovitz and Medhin \cite[Theorem IV.2.6]{BerkM12}. In assertion
(iii) it is clear, that every trajectory of (\ref{controlaffine}) with control
values in $\mathrm{co}(V)$ is a solution of the differential inclusion above,
which has compact convex velocity sets depending continuously (in the
Hausdorff metric) on $x$. The converse follows by a measurable selection
theorem, cp., e.g., Aubin and Frankowska \cite[Theorem 8.1.3]{AubF90a}.
\end{proof}

\begin{remark}
A consequence of this theorem is that the points in the reachable and
controllable sets defined in (\ref{orbits}) can be approximated using only
piecewise constant controls.
\end{remark}

We proceed to define maximal subsets of complete approximate controllability
(for some background see Colonius and Kliemann \cite{ColK00} and Kawan
\cite{Kawa13}).

\begin{definition}
A nonempty set $D\subset\mathbb{R}^{d}$ is a control set of system
(\ref{controlaffine}) if (i) it is controlled invariant, i.e., for every $x\in
D$ there is $v\in\mathcal{V}$ with $\varphi(t,x,v)\in D$ for all $t\geq0$ (ii)
for every $x\in D$ one has $D\subset\mathrm{cl}\mathcal{O}^{+}(x)$ and (iii)
$D$ is maximal with these properties. A control set $C$ is called an invariant
control set if $\mathrm{cl}C=\mathrm{cl}\mathcal{O}^{+}(x)$ for all $x\in C$.
\end{definition}

Invariant control sets need not be closed, as seen by the simple example%
\begin{equation}
\dot{x}=x(1-x)v(t),v(t)\in\lbrack-1,1]. \label{Example1}%
\end{equation}
Here $x=0$ and $x=1$ are fixed points for every $v\in\lbrack-1,1]$. Thus the
sets $\{0\},\{1\}$ and also the open interval $C:=(0,1)$ are invariant control sets.

\begin{proposition}
\label{proposition_int}(i) Let $M$ be a positively invariant subset of
$\mathbb{R}^{d}$. An invariant control set $C\subset M$ is closed relative to
$M$, i.e., $\mathrm{cl}_{M}C=C$, if for every $x\in\partial C\cap M$ the set
$\mathcal{O}^{+}(x)$ has nonvoid interior. In particular, an invariant control
set $C$ is closed if for every $x\in\partial C$ the set $\mathcal{O}^{+}(x)$
has nonvoid interior.

(ii) The compact invariant control sets coincide with the minimal compact
invariant sets, i.e., the compact invariant subsets $M\subset\mathbb{R}^{d}$
which do not contain a proper compact invariant subset.
\end{proposition}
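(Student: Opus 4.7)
For \textbf{Part (i)}, the plan is to argue by contradiction: assuming $x_{0}\in\mathrm{cl}_{M}C\setminus C$ (so $x_{0}\in\partial C\cap M$), I would show that $\bar{C}:=\mathrm{cl}_{M}C$ itself satisfies the defining conditions of a control set. Since $\bar{C}\supsetneq C$, this contradicts the maximality of $C$; the ``in particular'' statement then follows by taking $M=\mathbb{R}^{d}$.

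The approximate controllability of $\bar{C}$ at the new point $x_{0}$ is the key step. I would pick $w=\varphi(T,x_{0},v)\in\mathrm{int}\mathcal{O}^{+}(x_{0})$ using the hypothesis, and then exploit continuous dependence on initial conditions: for any sequence $x_{n}\in C$ with $x_{n}\to x_{0}$, one has $\varphi(T,x_{n},v)\to w$, so eventually $\varphi(T,x_{n},v)\in\mathrm{int}\mathcal{O}^{+}(x_{0})\cap\mathcal{O}^{+}(x_{n})\subset\mathrm{int}\mathcal{O}^{+}(x_{0})\cap\mathrm{cl}\,C$. Because $\mathrm{int}\mathcal{O}^{+}(x_{0})$ is open and meets $\mathrm{cl}\,C$, it must also meet $C$, producing a point $z\in C\cap\mathcal{O}^{+}(x_{0})$. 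Concatenation then yields $\mathrm{cl}\mathcal{O}^{+}(x_{0})\supset\mathrm{cl}\mathcal{O}^{+}(z)=\mathrm{cl}\,C\supset\bar{C}$, and the identical argument applies at any other $y\in\bar{C}\setminus C$. For controlled invariance at $x_{0}$, I would approximate $x_{0}$ by $x_{n}\in C$ equipped with controls $v_{n}$ that keep $\varphi(\cdot,x_{n},v_{n})\in C$, extract a weak-$*$ limit $v$ via a diagonal argument over bounded time intervals, and invoke continuity of trajectories of control-affine systems in $(x,v)$ (combined with Theorem \ref{Theorem_approx} if the control range is nonconvex) to conclude $\varphi(t,x_{0},v)\in\mathrm{cl}\,C\cap M=\bar{C}$ for all $t\geq 0$, the last equality using positive invariance of $M$.

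For \textbf{Part (ii)}, the forward implication is read off the definitions: a compact invariant control set $C$ satisfies $\mathcal{O}^{+}(x)\subset\mathrm{cl}\,C=C$ for every $x\in C$, so $C$ is (positively) invariant, and any proper compact invariant subset $M'\subsetneq C$ would contain a point $x$ with $\mathrm{cl}\mathcal{O}^{+}(x)\subset M'$, contradicting $\mathrm{cl}\mathcal{O}^{+}(x)=C$. Conversely, given a minimal compact invariant set $M$, controlled invariance at any $x\in M$ is automatic; approximate controllability follows because $\mathrm{cl}\mathcal{O}^{+}(x)\cap M$ is a compact invariant subset of $M$ containing $x$, hence equals $M$ by minimality; and maximality of $M$ as a control set is immediate, since a strictly larger candidate $D\supsetneq M$ would contain $y\notin M$ with $y\in\mathrm{cl}\mathcal{O}^{+}(x)\subset M$ for $x\in M$, a contradiction. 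The equality $\mathrm{cl}\,M=\mathrm{cl}\mathcal{O}^{+}(x)$ then upgrades $M$ to an invariant control set.

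I expect the \emph{controlled-invariance} step in Part (i) to be the delicate point: passing to a weak-$*$ limit of the controls $v_{n}$ and guaranteeing that the limit trajectory remains in $\mathrm{cl}\,C$ for \emph{all} $t\geq 0$ simultaneously, rather than only on a fixed time window. This is cleanest when the control range is convex (as for the convexified system of Theorem \ref{Theorem_approx}) and when $M$ is compact, so that weak-$*$ compactness of $L^{\infty}([0,T],\mathrm{co}(V))$ applies on each bounded interval and a diagonal subsequence extracts the required global limit control.
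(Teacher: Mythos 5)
Your overall architecture for part (i) -- pass to $\bar{C}=\mathrm{cl}_{M}C$ and contradict maximality -- is sound, and your approximate-controllability step (producing $z\in C\cap\mathcal{O}^{+}(x_{0})$ via continuous dependence and the openness of $\mathrm{int}\,\mathcal{O}^{+}(x_{0})$) is essentially the paper's argument, which adds boundary points one at a time and shows $C\cup\{x\}$ has properties (i) and (ii). The genuine gap is exactly the step you flag: controlled invariance of $\bar{C}$. The proposition is stated for (\ref{controlaffine}) with an arbitrary compact control range $V$, and in the intended application to (\ref{CS}) the range $S$ of (\ref{S}) is the set of vertices, hence not convex. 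A weak-$*$ limit of the $V$-valued controls $v_{n}$ is then only $\mathrm{co}(V)$-valued, so the limiting trajectory is a trajectory of the relaxed system and is not admissible for the system whose control sets you are studying; invoking Theorem \ref{Theorem_approx}(ii) afterwards gives only approximate, not exact, containment in $\mathrm{cl}\,C$, whereas property (i) of the definition demands exact containment for all $t\geq0$. The fix is much simpler and needs no compactness of controls: you already have $z=\varphi(T,x_{0},v)\in C$ with an admissible $v$; by the same continuity argument every intermediate point $\varphi(s,x_{0},v)$, $s\in[0,T]$, lies in $\mathrm{cl}\,\mathcal{O}^{+}(x_{0})\subset\mathrm{cl}\,C$ and in $M$ (positive invariance of $M$), hence in $\bar{C}$; concatenating with a control keeping $z$ in $C$ (controlled invariance of $C$) yields controlled invariance of $\bar{C}$ with an admissible control.

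In part (ii) the gap is the meaning of ``invariant''. The paper defines invariance of a set $N$ by the equality $\{\varphi(t,x,v)\mid x\in N,\ v\in\mathcal{V}\}=N$ for all $t\geq0$, not by forward invariance alone. Your forward direction only establishes that a compact invariant control set $C$ is positively invariant and contains no proper compact invariant subset; it never shows that $C$ is itself invariant in this stronger sense, so you have not shown $C$ is a minimal compact invariant set. In the converse direction you assert that $\mathrm{cl}\,\mathcal{O}^{+}(x)\cap M$ is a compact \emph{invariant} subset of $M$, but what follows easily from continuity is only its positive invariance, and the minimality of $M$ is with respect to invariant subsets, so it cannot be applied directly. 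The paper closes both holes with one ingredient absent from your proposal: every compact positively invariant set contains a minimal compact invariant set (Zorn's lemma, \cite[Proposition 3.2]{LamRR15}). Applied inside $C$ this gives a minimal compact invariant $N\subset C$, and $C=\mathrm{cl}\,\mathcal{O}^{+}(x)$ for $x\in N$ forces $N=C$ (otherwise points arbitrarily close to some $y\in C\setminus N$ would be reachable from $N$, contradicting its invariance); applied inside $\mathrm{cl}\,\mathcal{O}^{+}(x)\subset M$ it gives, by minimality of $M$, that $M\subset\mathrm{cl}\,\mathcal{O}^{+}(x)$, which is the approximate controllability you need. Your maximality argument for $M$ is fine. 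If you read ``invariant'' as ``positively invariant'', your argument for (ii) is essentially correct, but it then proves a different statement from the one in the paper.
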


\begin{proof}
(i) We use repeatedly that on every bounded time interval the solution of a
differential equation depends continuously on the initial value. One finds
that $\mathrm{cl}\mathcal{O}^{+}(x)\subset\mathrm{cl}C$ for all $x\in
\mathrm{cl}C\cap M$. In particular, for $x\in\partial C\cap M$ with
$\mathrm{int}\mathcal{O}^{+}(x)\not =\emptyset$ it follows that $\mathrm{int}%
\mathcal{O}^{+}(x)\subset\mathrm{cl}C\cap M$, hence there is $y\in
\mathcal{O}^{+}(x)\cap C$. Then every $z\in C$ is in $\mathrm{cl}%
\mathcal{O}^{+}(x)$. By the choice of $x$ one also has $x\in\mathrm{cl}%
\mathcal{O}^{+}(z)$ for all $z\in C$. Then one sees that $C\cup\{x\}$
satisfies the properties (i) and (ii) of a control set, hence the maximality
property of $C$ implies that $x\in C$.

(ii) Let $C$ be a compact invariant control set. Then $\mathrm{cl}\left(
\bigcup_{x\in C}\mathcal{O}^{+}(x)\right)  \subset C$, and hence Lamb,
Rasmussen and Rodrigues \cite[Proposition 3.2]{LamRR15} implies that $C$
contains a minimal compact invariant set $M$. We want to show now that $M=C$.
If $M\subsetneq C$, then, since both sets are compact, there exists a $y\in
C\setminus M$ with $\operatorname{dist}(y,M)>0$. Since for all $x\in M$, one
has $C=\mathrm{cl}\mathcal{O}^{+}(x)$, in particular $y\in\mathrm{cl}%
\mathcal{O}^{+}(x)$, the set $M$ is not invariant, which is a contradiction,
so $C$ is a minimal compact invariant set. Conversely, every minimal compact
invariant subset $M$ satisfies $\mathrm{cl}\mathcal{O}^{+}(x)\subset M$ for
all $x\in M$. By continuous dependence on the initial value, the set
$\mathrm{cl}\mathcal{O}^{+}(x)$ is positively invariant, hence it contains a
minimal compact invariant subset which by minimality of $M$ coincides with $M$.
\end{proof}

\begin{remark}
The result from \cite[Proposition 3.2]{LamRR15} used above is formulated for
set valued dynamical systems. For control systems it means that in a compact
positively invariant set $M$ there is a compact subset $N$ with
\[
\{\varphi(t,x,v)\left\vert t\geq0,x\in N,v\in\mathcal{V}\right.
\}\allowbreak=N,
\]
containing no proper subset with this property. This is proved as follows:
Consider the collection%
\[
\mathcal{K}:=\{A\subset M\left\vert A\text{ is compact with }\varphi(t,x,v)\in
A\text{ for all }t\geq0,x\in A,v\in\mathcal{V}\right.  \}.
\]
This collection is partially ordered by set inclusion and every totally
ordered subcollection has a lower bound in $\mathcal{K}$ given by the
intersection of its elements. Then Zorn's Lemma implies that there exists at
least one minimal element in $\mathcal{K}$ which turns out to be a
minimal\ compact invariant set.
\end{remark}

Control system (\ref{controlaffine}) is called locally accessible in
$x\in\mathbb{R}^{d}$ if for all $T>0$ and all neighborhoods $N$ of $x$
\begin{equation}
\mathrm{int}\mathcal{O}_{\leq T}^{+}(x)\cap N\not =\emptyset\text{ and
}\mathrm{int}\mathcal{O}_{\leq T}^{-}(x)\cap N\not =\emptyset. \label{loc_acc}%
\end{equation}
It is called locally accessible on a subset $M\subset\mathbb{R}^{d}$ (or $M$
is locally accessible) if it is locally accessible in every point $x\in M$.
Recall that the Lie algebra $\mathcal{LA}(\mathcal{F})$ generated by a family
$\mathcal{F}$ of vector fields is the smallest vector space containing
$\mathcal{F}$ that is closed under Lie brackets
\[
\lbrack f,g]:=\frac{\partial g}{\partial x}f-\frac{\partial f}{\partial x}g.
\]
The analysis of controllability properties is simplified in the following situation.

\begin{theorem}
\label{Theorem_LARC}Consider a control system of the form (\ref{controlaffine}%
) and suppose that the Lie algebra $\mathcal{LA}=\mathcal{LA}\{f_{0}%
+\sum\nolimits_{i=1}^{m}v_{i}f_{i}\left\vert v\in V\right.  \}$ satisfies for
some $x\in\mathbb{R}^{d}$%
\begin{equation}
\{g(x)\left\vert g\in\mathcal{LA}\right.  \}=\mathbb{R}^{d}\text{.}
\label{LARC}%
\end{equation}
Then the system is locally accessible in $x$ and the condition in
(\ref{loc_acc}) even holds for the reachable and controllable sets
$\mathcal{O}_{pc,\leq T}^{\pm}(x)$ corresponding to piecewise constant controls.
\end{theorem}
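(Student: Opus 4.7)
The plan is to prove Theorem \ref{Theorem_LARC} by the standard technique from geometric control theory: build an explicit smooth map from a parameter space of switching times into the state space whose image lies in the piecewise-constant reachable set, and show that the LARC at $x$ forces this map to have full rank at points whose images can be placed in any prescribed neighborhood of $x$.

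First, I would fix an integer $k$, control values $v^{1},\ldots,v^{k}\in V$, and consider the map $\Psi\colon\mathbb{R}_{\geq 0}^{k}\to\mathbb{R}^{d}$ defined by
\[
\Psi(t_{1},\ldots,t_{k}):=\Phi^{v^{k}}_{t_{k}}\circ\cdots\circ\Phi^{v^{1}}_{t_{1}}(x),
\]
where $\Phi^{v}_{t}$ denotes the flow of $g_{v}:=f_{0}+\sum_{i=1}^{m}v_{i}f_{i}$. For $(t_{1},\ldots,t_{k})$ with $t_{j}\geq 0$ and $\sum t_{j}\leq T$, the point $\Psi(t_{1},\ldots,t_{k})$ belongs to $\mathcal{O}_{pc,\leq T}^{+}(x)$ by construction. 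The goal is therefore to find $k$, $v^{1},\ldots,v^{k}$ and $\bar t$ with small positive components such that $\Psi$ has rank $d$ at $\bar t$ and $\Psi(\bar t)$ lies in $N$.

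Next, I would analyze the rank of $\Psi$ near the origin of parameter space. A classical computation (as in the Sussmann--Jurdjevic orbit theorem) expresses the partial derivatives $\partial\Psi/\partial t_{j}$ and their higher mixed derivatives at $(0,\ldots,0)$, after pushing forward under previous flows, in terms of the values at $x$ of iterated Lie brackets of the family $\{g_{v}\colon v\in V\}$. Arguing by contradiction: if for every $k$ and every choice of $v^{1},\ldots,v^{k}\in V$ the image of a neighborhood of $(0,\ldots,0)$ under $\Psi$ had empty interior, then all these derivatives would lie in a common proper subspace of $\mathbb{R}^{d}$, forcing $\{h(x)\colon h\in\mathcal{LA}\}$ into that subspace and contradicting (\ref{LARC}). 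Consequently there exist $k$, $v^{1},\ldots,v^{k}\in V$ and $\bar t$ with small positive components (so that $\sum \bar t_{j}<T$ and, by continuity of $\Psi$ with $\Psi(0)=x$, also $\Psi(\bar t)\in N$) at which $\Psi$ has rank $d$. The rank theorem then provides an open neighborhood of $\Psi(\bar t)$ inside the image of $\Psi$, hence inside $\mathrm{int}\mathcal{O}_{pc,\leq T}^{+}(x)\cap N$.

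For the controllable sets $\mathcal{O}_{pc,\leq T}^{-}(x)$, I would apply the same argument to the time-reversed system $\dot y=-g_{v}(y)$, $v\in V$. Since $[-f,-g]=[f,g]$, the Lie algebra generated by $\{-g_{v}\colon v\in V\}$ agrees with $\mathcal{LA}$ as a vector subspace, so the rank condition (\ref{LARC}) is preserved at $x$. A forward piecewise-constant trajectory of the reversed system from $x$ to $y$ in time $\leq T$ corresponds to a piecewise-constant trajectory of the original system from $y$ to $x$ in time $\leq T$, i.e., $y\in\mathcal{O}_{pc,\leq T}^{-}(x)$. The forward argument applied to the reversed system therefore yields $\mathrm{int}\mathcal{O}_{pc,\leq T}^{-}(x)\cap N\neq\emptyset$.

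The main obstacle is the rank step: showing that LARC at $x$ forces some composition $\Psi$ to have full rank at a parameter point arbitrarily close to the origin. This is the technical core of the Sussmann--Jurdjevic / Krener accessibility theorem; the delicate part is the iterated Taylor expansion of $\Psi$ that links the span of its partial derivatives at the origin to the span of $\{h(x)\colon h\in\mathcal{LA}\}$. Once this is in place, both the localization inside $N$ and the time bound $T$ follow by choosing the switching times $\bar t_{j}$ small.
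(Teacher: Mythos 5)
The paper does not give an argument for this theorem at all: it is a classical result (Krener's accessibility theorem) and the text simply points to Sontag \cite[Theorems 9 and 12]{Sont98} for the proof. Your outline is precisely the standard route that the cited proof takes --- compose flows $\Phi^{v^j}_{t_j}$ of constant-control vector fields $g_{v}=f_{0}+\sum_i v_i f_i$, show that the LARC forces some such composition $\Psi$ to have rank $d$ at a parameter point with small positive entries, and conclude with the rank theorem; the time-reversal step for $\mathcal{O}_{pc,\leq T}^{-}(x)$ is also correct, since the Lie algebra generated by $\{-g_v\}$ coincides with $\mathcal{LA}$ and reversed trajectories are exactly backward trajectories of the original system.

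However, the one step that carries the entire content of the theorem --- the rank step --- is not actually proved, and the way you sketch it would not work as stated. You argue that if every $\Psi$ had image with empty interior near the origin of parameter space, then ``all these derivatives'' at $(0,\ldots,0)$ would lie in a common proper subspace, forcing $\{h(x): h\in\mathcal{LA}\}$ into that subspace. But the partial derivatives of $\Psi$ at the origin span only $\mathrm{span}\{g_v(x): v\in V\}$, and the higher mixed derivatives at the origin produce terms like $Dg_{v^k}(x)\,g_{v^j}(x)$, not Lie brackets; extracting brackets from a Taylor expansion at $0$ is obstructed by the constraint $t_j\geq 0$ (one cannot run flows backward). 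Krener's actual argument is different: one takes a composition of maximal rank $r$ attained at some parameter point with small \emph{positive} times, notes that near that point the image is an $r$-dimensional immersed submanifold $S$ not containing $x$, shows by maximality that every $g_v$ is tangent to $S$, hence every iterated bracket is, so $\dim\{h(y):h\in\mathcal{LA}\}\leq r$ for $y\in S$; one then needs the additional observation that (\ref{LARC}) at $x$ implies full rank of $\mathcal{LA}$ at all points sufficiently near $x$ (continuity of a suitable $d\times d$ determinant), which yields the contradiction with $r<d$ because $S$ can be placed arbitrarily close to $x$. Neither the maximal-rank/tangency mechanism nor the propagation of (\ref{LARC}) to a neighborhood of $x$ appears in your proposal, and you explicitly defer this ``technical core'' to the literature. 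So, as a self-contained proof the proposal has a genuine gap exactly where the theorem is nontrivial; as a citation it adds nothing beyond what the paper already does by invoking \cite{Sont98}, and the heuristic you substitute for the cited argument is misleading.
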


We note that condition (\ref{LARC}) is also necessary for local accessibility,
if the involved vector fields are real analytic; cf. Sontag \cite[Theorems 9
and 12]{Sont98} for a proof of Theorem \ref{Theorem_LARC} and the necessity statement.

\begin{remark}
For control system (\ref{CS}) (or (\ref{CS2}) with control range (\ref{U}) or
(\ref{coU})) the Lie algebra $\mathcal{LA}$ from Theorem \ref{Theorem_LARC}
coincides with the Lie algebra generated by the vector fields $F^{0}%
,\ldots,F^{m}$.
\end{remark}

In order to derive some further properties of control sets we adapt the
following lemma from Colonius and Kliemann \cite[Lemma 4.5.4]{ColK00}.

\begin{lemma}
\label{Lemma4.8}Let $x\in\mathbb{R}^{d}$ and $v\in\mathcal{V}$ with
$\varphi(T,x,v)\in\mathrm{int}\mathcal{O}_{\leq T+S}^{+}(x)$ for some $T,S>0$
and assume that the system is locally accessible at $\varphi(T,x,v)$. Then%
\[
x\in\mathrm{int}\mathcal{O}_{\leq T+2S}^{-}(\varphi(T,x,v)).
\]

\end{lemma}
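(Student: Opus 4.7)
Set $y:=\varphi(T,x,v)$. The plan is to produce a neighborhood $N_{x}$ of $x$ such that every $x'\in N_{x}$ can be steered to $y$ in time at most $T+2S$; this is exactly the assertion $x\in\mathrm{int}\mathcal{O}_{\leq T+2S}^{-}(y)$.

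The hypothesis supplies an open neighborhood $U$ of $y$ with $U\subset\mathcal{O}_{\leq T+S}^{+}(x)$. Local accessibility at $y$ guarantees that the open set $\mathrm{int}\mathcal{O}_{\leq S}^{-}(y)$ meets every neighborhood of $y$, so I pick an intermediate point $z\in U\cap\mathrm{int}\mathcal{O}_{\leq S}^{-}(y)$ and then an open neighborhood $W$ of $z$ with $W\subset\mathcal{O}_{\leq S}^{-}(y)$. Thus every point of $W$ admits some control reaching $y$ in time at most $S$. Because $z\in U\subset\mathcal{O}_{\leq T+S}^{+}(x)$, I also fix one specific control $w^{\ast}\in\mathcal{V}$ and one specific time $\tau\leq T+S$ with $\varphi(\tau,x,w^{\ast})=z$.

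Next I use continuous dependence of $\varphi(\tau,\cdot,w^{\ast})$ on the initial condition (with control and time now fixed): there is a neighborhood $N_{x}$ of $x$ whose image under this map lies in $W$. For $x'\in N_{x}$, set $z':=\varphi(\tau,x',w^{\ast})\in W$. Then there exist a control $u_{z'}\in\mathcal{V}$ and a time $\sigma_{z'}\leq S$ with $\varphi(\sigma_{z'},z',u_{z'})=y$, and concatenating $w^{\ast}$ on $[0,\tau]$ with $u_{z'}$ shifted to $[\tau,\tau+\sigma_{z'}]$ produces an admissible control in $\mathcal{V}$ that steers $x'$ to $y$ in total time $\tau+\sigma_{z'}\leq T+2S$.

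The only delicate point is the dual nature of $z$: it must be reachable from $x$ (so that the prefix $w^{\ast}$ can be fixed once and for all and continuity in $x'$ applies) and simultaneously sit in the open set of points still able to reach $y$ in time $\leq S$ (so that a whole neighborhood $W$ inherits this property). Local accessibility at $y$, combined with $y$ being interior to the forward reachable set from $x$, is exactly what forces these two open sets to share a common point $z$. Once $z$, $W$, $w^{\ast}$, and $\tau$ are in hand, the rest is routine continuous dependence and concatenation of $L^{\infty}$ controls, and the time bound $\tau+\sigma_{z'}\leq(T+S)+S$ is immediate.
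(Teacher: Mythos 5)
Your proposal is correct and follows essentially the same route as the paper's proof: an intermediate point $z\in\mathrm{int}\mathcal{O}_{\leq T+S}^{+}(x)\cap\mathrm{int}\mathcal{O}_{\leq S}^{-}(y)$ obtained from local accessibility, continuous dependence on the initial value for the fixed control steering $x$ to $z$, and concatenation to bound the total time by $T+2S$. The only cosmetic difference is that the paper takes the backward time bound $t_{0}\in(0,S]$ small rather than $S$ itself, which does not affect the estimate.
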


\begin{proof}
We find an open neighborhood $N(y)\subset\mathrm{int}\mathcal{O}_{\leq
T+S}^{+}(x)$ of $y:=\varphi(T,x,v)$. Local accessibility at $y$ implies that
there is $z\in N(y)\cap\mathrm{int}\mathcal{O}_{\leq t_{0}}^{-}(y)$ for every
$t_{0}\in(0,S]$ small enough. Then there are a control $v$ and a neighborhood
$N(x)$ of $x$ such that $N(x)$ is mapped in a time $T_{1}\leq T+S$ via the
solution map corresponding to $v$ onto a neighborhood $N(z)$ of $z$ contained
in $N(y)\cap\mathcal{O}_{\leq t_{0}}^{-}(y)$. We obtain
\[
x\in N(x)\subset\mathcal{O}_{\leq T_{1}+t_{0}}^{-}(y)\subset\mathcal{O}_{\leq
T+2S}^{-}(\varphi(T,x,v)).
\]

\end{proof}

\begin{theorem}
\label{basic1}Consider system (\ref{controlaffine}) and assume that it is
locally accessible on a positively invariant subset $M\subset\mathbb{R}^{d}$.

(i) There are at most countably many invariant control sets $C_{r},r\in
I\subset\mathbb{N}$ in $M$. They are closed relative to $M$ and have nonvoid
interiors. The invariant control sets $C$ and their interiors are positively
invariant. Furthermore, $\mathrm{cl}_{M}(\mathrm{int}C)=C$ and for all $x\in
C$ one has $\mathrm{int}C\subset\mathcal{O}^{+}(x)$ and $C=\mathrm{cl}%
_{M}\mathcal{O}^{+}(x)$. If the Lie algebra rank condition (\ref{LARC}) holds
on $\mathrm{int}C$, then it even follows that $\mathrm{int}C\subset
\mathcal{O}_{pc}^{+}(x)$ for all $x\in C$.

(ii) Let $K\subset M$ be compact. Then there are at most finitely many
invariant control sets $C$ such that $C\cap K\not =\varnothing$.
\end{theorem}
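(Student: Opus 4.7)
The plan is to derive every assertion of part (i) from three ingredients already at hand: Proposition \ref{proposition_int}(i), the fact that the Lipschitz flow $\varphi(t,\cdot,v)$ is a homeomorphism, and the fact that local accessibility endows every reachable (and controllable) set with nonempty interior in $\mathbb{R}^{d}$. The main obstacle is upgrading the approximate reachability $\mathrm{int}\,C\subset\mathrm{cl}\,\mathcal{O}^{+}(x)$, which is immediate from the definition of an invariant control set, to the sharp statement $\mathrm{int}\,C\subset\mathcal{O}^{+}(x)$; this will be handled by a density argument that matches a forward-reachable trajectory with a point in an open backward-reachable set. Part (ii) is then reduced to a disjointness argument using Lemma \ref{Lemma4.8}.

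\textbf{Step 1: structure of $C$.} Local accessibility at every $x\in\partial C\cap M$ provides $\mathrm{int}\,\mathcal{O}^{+}(x)\neq\emptyset$, so Proposition \ref{proposition_int}(i) gives $\mathrm{cl}_{M}C=C$. For any $x\in C$ and $v\in\mathcal{V}$, positive invariance of $M$ together with $\varphi(t,x,v)\in\mathrm{cl}\,\mathcal{O}^{+}(x)=\mathrm{cl}\,C$ and $\mathrm{cl}\,C\cap M=\mathrm{cl}_{M}C=C$ gives $\mathcal{O}^{+}(x)\subset C$; hence $C$ is positively invariant and $\emptyset\neq\mathrm{int}\,\mathcal{O}^{+}(x)\subset\mathrm{int}\,C$. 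Because $\varphi(t,\cdot,v)$ is a homeomorphism on $\mathbb{R}^{d}$, it maps the open set $\mathrm{int}\,C$ to an open subset of $C$, i.e.\ into $\mathrm{int}\,C$, so $\mathrm{int}\,C$ is positively invariant. Taking relative closures in the chain $\mathcal{O}^{+}(x)\subset C\subset\mathrm{cl}\,\mathcal{O}^{+}(x)\cap M$ yields $C=\mathrm{cl}_{M}\,\mathcal{O}^{+}(x)$; and for each $x\in C$ the local-accessibility points produced in an arbitrary neighborhood of $x$ lie in $\mathrm{int}\,\mathcal{O}^{+}(x)\subset\mathrm{int}\,C$, so $\mathrm{cl}_{M}(\mathrm{int}\,C)=C$.

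\textbf{Step 2: reaching $\mathrm{int}\,C$, and the piecewise-constant version.} Fix $x\in C$ and $y\in\mathrm{int}\,C$, and choose an open neighborhood $U\subset C$ of $y$. Local accessibility at $y$ supplies $z\in U\cap\mathrm{int}\,\mathcal{O}^{-}_{\leq S}(y)$. Since $z\in C=\mathrm{cl}_{M}\,\mathcal{O}^{+}(x)$, one can pick $w\in\mathcal{O}^{+}(x)$ so close to $z$ that $w$ still lies in the open set $\mathrm{int}\,\mathcal{O}^{-}_{\leq S}(y)$; concatenating a control from $x$ to $w$ with a control from $w$ to $y$ gives $y\in\mathcal{O}^{+}(x)$. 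Under LARC on $\mathrm{int}\,C$, Theorem \ref{Theorem_LARC} allows one to choose $z\in\mathrm{int}\,\mathcal{O}^{-}_{pc,\leq S}(y)$, and Theorem \ref{Theorem_approx}(i) gives $\mathcal{O}^{+}(x)\subset\mathrm{cl}\,\mathcal{O}^{+}_{pc}(x)$, so the identical argument produces $y\in\mathcal{O}^{+}_{pc}(x)$.

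\textbf{Step 3: countability and part (ii).} If $x\in\mathrm{int}\,C_{1}\cap\mathrm{int}\,C_{2}$, then the identity $C_{j}=\mathrm{cl}_{M}\,\mathcal{O}^{+}(x)$ from Step 1 forces $C_{1}=C_{2}$; hence the open sets $\mathrm{int}\,C_{r}$ are pairwise disjoint, and the separability of $\mathbb{R}^{d}$ forces $I$ to be countable. For (ii) I argue by contradiction: if infinitely many $C_{r}$ intersect the compact set $K$, extract $x_{r}\in C_{r}\cap K$ converging to some $x\in M$. Local accessibility at $x$ gives $y=\varphi(T_{0},x,v)\in\mathrm{int}\,\mathcal{O}^{+}_{\leq T}(x)$, and Lemma \ref{Lemma4.8} then yields $x\in\mathrm{int}\,\mathcal{O}^{-}_{\leq T_{0}+2S}(y)$. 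For all sufficiently large $r$, $x_{r}$ lies in this open neighborhood, so $y\in\mathcal{O}^{+}(x_{r})\subset C_{r}$ by positive invariance; thus the same point $y$ lies in infinitely many $C_{r}$, contradicting the disjointness established above.
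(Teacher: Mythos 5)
Your proof is correct and follows essentially the same route as the paper: Proposition \ref{proposition_int}(i) for relative closedness, local accessibility together with density of $\mathcal{O}^{+}(x)$ in $C$ for $\mathrm{int}\,C\subset\mathcal{O}^{+}(x)$ (with Theorem \ref{Theorem_LARC} and Theorem \ref{Theorem_approx}(i) for the piecewise-constant refinement), and Lemma \ref{Lemma4.8} for finiteness on compact sets; your invariance-of-domain argument for positive invariance of $\mathrm{int}\,C$ is a cleaner shortcut than the paper's boundary-exit contradiction. One cosmetic point: in (ii) the contradiction requires pairwise disjointness of the sets $C_{r}$ themselves, not merely of their interiors, but this is immediate from your Step 1 identity $C_{r}=\mathrm{cl}_{M}\mathcal{O}^{+}(y)$, which holds for every $y\in C_{r}$.
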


\begin{proof}
(i) Closedness relative to $M$ follows by Proposition \ref{proposition_int}%
(i). For every invariant control set $C$, the interior of $C$ is nonvoid,
since $\mathcal{O}^{+}(x)\subset C,x\in C$, has nonvoid interior. This implies
that there are at most countably many invariant control sets, since the
topology of $\mathbb{R}^{d}$ has a countable base.

Furthermore, it is clear that $\mathrm{cl}_{M}(\mathrm{int}C)\subset C$. In
order to see the converse inclusion, consider a nonvoid open subset
$\mathcal{U}\subset\mathrm{int}C$. Then for every $t>0$ and $v\in\mathcal{V}$
the set $\{\varphi(t,x,v)\left\vert x\in\mathcal{U}\right.  \}$ is open, and
the assertion follows, since $\mathcal{O}^{+}(x)$ is dense in $C$. Note that
this argument does not use local accessibility, hence it shows that for any
invariant control set either the interior is void or dense.

Let $x\in C$ and $y\in\mathrm{int}C$. Then there is $T>0$ with $\mathrm{int}%
\mathcal{O}_{\leq T}^{-}(y)\subset C$ and hence one can steer the point $x$ to
some point $z\in\mathrm{int}\mathcal{O}_{\leq T}^{-}(y)$. Concatenating the
corresponding control with a control steering $z$ to $y$ one finds that
$y\in\mathcal{O}^{+}(x)$ implying $\mathrm{int}C\subset\mathcal{O}^{+}(x)$ and
also $C\subset\mathrm{cl}_{M}\mathcal{O}^{+}(x)$ for all $x\in C$. If
(\ref{LARC}) holds on $C$, Theorem \ref{Theorem_LARC} allows us to replace
$\mathrm{int}\mathcal{O}_{\leq T}^{-}(y)$ by $\mathrm{int}\mathcal{O}_{pc,\leq
T}^{-}(y)$ in this argument. By the approximation property in Theorem
\ref{Theorem_approx}(i) one can steer the point $x$ into $\mathrm{int}%
\mathcal{O}_{pc,\leq T}^{-}(y)$. Hence $\mathrm{int}C\subset\mathcal{O}%
_{pc}^{+}(x)$ for all $x\in C$.

In order to show positive invariance of $\mathrm{int}C$ suppose that there are
$x\in\mathrm{int}C,T_{0}>0$ and $v_{0}\in\mathcal{V}$ with $\varphi
(T_{0},x,v_{0})\not \in \mathrm{int}C$. Let%
\[
T_{1}:=\sup\{t\in\lbrack0,T_{0}]\left\vert \varphi(s,x,v_{0})\in
\mathrm{int}C\text{ for all }s\in\lbrack0,t]\right.  \}.
\]
Then it follows that $y:=\varphi(T_{1},x_{0},v_{0})\in(C\cap M)\setminus
(\mathrm{int}C)\subset\mathrm{cl}_{M}C=C$. Hence there are $\tau>0$ and
$v_{1}\in\mathcal{V}$ with $\varphi(\tau,y,v_{1})\in\mathrm{int}C$. Observe
that every neighborhood of $y$ intersects $\{\varphi(t,x,v_{0})\left\vert
t\in(T_{1},T_{0}]\right.  \}\cap M$. By continuous dependence on the initial
value a neighborhood of $y$ is mapped into the interior of $C$ and hence there
is $T_{2}\in(T_{1},T_{0}]$ such that for all $t\in\lbrack T_{1},T_{2})$ the
intersection $\mathcal{O}^{+}(\varphi(t,x,v_{0}))\cap\mathrm{int}C$ is
nonvoid. Then it follows that the point $x\in\mathrm{int}C$ is in
$\mathcal{O}^{+}(\varphi(t,x,v_{0}))\subset\mathcal{O}^{+}(x)$. This easily
implies that also all points $\varphi(t,x,v_{0}),t\in\lbrack T_{1},T_{2})$ are
in $\mathrm{int}C$ contradicting the definition of $T_{1}$. It follows that
$\mathrm{int}C$ is positively invariant. Similar, but simpler arguments show
that also $C$ is positively invariant.

(ii) If the assertion is false, one finds infinitely many invariant control
sets $C_{n},n\in\mathbb{N}$ and points $x_{n}\in$ $\mathrm{cl}C_{n}\cap K$.
This sequence has a cluster point and every cluster point $x$ is in $K\subset
M$. By local accessibility on $M$ there are $T,S>0$ and a control $v$ with
$\varphi(T,x,v)\in\mathrm{int}\mathcal{O}_{\leq T+S}^{+}(x)$. By Lemma
\ref{Lemma4.8} we find that there is an open neighborhood $V(x)$ of $x$ with
$V(x)\subset\mathrm{int}\mathcal{O}_{\leq T+2S}^{-}(\varphi(T,x,v))$. For
$n\in\mathbb{N}$ large enough $x_{n}\in V(x)$ and $x_{n}\in\mathrm{cl}C_{n}$,
hence one finds $y_{n}\in C_{n}$%
\[
y_{n}\in\mathcal{O}_{\leq T+2S}^{-}(\varphi(T,x,v)).
\]
This shows that for all $n$ large enough the points $y_{n}\in C_{n}$ can be
steered to the single point $\varphi(T,x,v)$. This contradicts positive
invariance of the pairwise different invariant control sets $C_{n}$.
\end{proof}

\begin{remark}
In the system given by (\ref{Example1}), the invariant control set $C=(0,1)$
is trivially closed relative to the positively invariant set $M=(0,1)$, since
$\partial C\cap M=\varnothing$ (observe that here local accessibility on $M$
holds). In the absence of local accessibility, Colonius and Kliemann
\cite[Example 3.2.9]{ColK00} presents an example of an invariant control set
which is neither open nor closed in $\mathbb{R}^{2}$ and which is not
positively invariant.
\end{remark}

The next theorem does not assume that the system is locally accessible on $M$.

\begin{theorem}
\label{basic2}Consider system (\ref{controlaffine}) on a positively invariant
subset $M\subset\mathbb{R}^{d}$.

(i) If there exists a compact subset $K\subset M$ such that for every $x\in M$
one has $\mathrm{cl}\mathcal{O}^{+}(x)\cap K\not =\emptyset$, then for every
$x\in M$ there exists an invariant control set $C\subset\mathrm{cl}%
_{M}\mathcal{O}^{+}(x)$. If the system is also locally accessible in $M$, then
there are only finitely many invariant control sets in $M$.

(ii) Conversely, if for every $x\in M$ there exists an invariant control set
$C\subset\mathrm{cl}_{M}\mathcal{O}^{+}(x)$ and there are only finitely many
invariant control sets in $M$, then there exists a compact subset $K\subset M$
such that for every $x\in M$ one has $\mathrm{cl}\mathcal{O}^{+}(x)\cap
K\not =\emptyset$.
\end{theorem}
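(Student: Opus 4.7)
\textbf{For part (i)}, the plan is a Zorn's Lemma argument inside $\mathrm{cl}_{M}\mathcal{O}^{+}(x)$. Consider the family
\[
\mathcal{A}:=\{A\subset\mathrm{cl}_{M}\mathcal{O}^{+}(x)\mid A\text{ nonempty, closed in }M,\text{ positively invariant, }A\cap K\neq\emptyset\}
\]
ordered by reverse inclusion. The ambient set $\mathrm{cl}_{M}\mathcal{O}^{+}(x)$ itself lies in $\mathcal{A}$: its positive invariance follows from continuous dependence on initial conditions together with concatenation of controls (the same idea used in the proof of Theorem~\ref{basic1}(i)), and it hits $K$ by hypothesis. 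For a totally ordered subfamily $\{A_{\alpha}\}\subset\mathcal{A}$, the sets $A_{\alpha}\cap K$ form a nested family of nonempty compact subsets of $K$; by the finite intersection property their intersection is nonempty, so $\bigcap_{\alpha}A_{\alpha}$ is a lower bound in $\mathcal{A}$ and Zorn yields a minimal element $A_{0}$.

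\textbf{Next I would identify $A_{0}$ with an invariant control set.} For each $y\in A_{0}$, the set $\mathrm{cl}_{M}\mathcal{O}^{+}(y)$ again lies in $\mathcal{A}$ (it intersects $K$ because $y\in M$) and is contained in $A_{0}$ by positive invariance, so minimality forces $\mathrm{cl}_{M}\mathcal{O}^{+}(y)=A_{0}$ for every $y\in A_{0}$. This simultaneously yields approximate reachability and (stronger than) controlled invariance. For maximality, any control set $D\supseteq A_{0}$ satisfies, for any $z\in A_{0}$, $D\subset\mathrm{cl}\mathcal{O}^{+}(z)\subset\mathrm{cl}A_{0}\subset\mathrm{cl}M$, trapping $D$ in $\mathrm{cl}A_{0}$; combined with $\mathcal{O}^{+}(z)\subset A_{0}$ this gives $\mathrm{cl}D=\mathrm{cl}A_{0}$, so $A_{0}$ is maximal in $M$, and the same two-sided inclusion delivers the invariance identity $\mathrm{cl}A_{0}=\mathrm{cl}\mathcal{O}^{+}(y)$.

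\textbf{Finiteness in (i) and part (ii).} Under local accessibility, Theorem~\ref{basic1}(i) says each invariant control set $C\subset M$ is closed in $M$ with $C=\mathrm{cl}_{M}\mathcal{O}^{+}(z)$ for $z\in C$, so the hypothesis forces $C\cap K\neq\emptyset$, and Theorem~\ref{basic1}(ii) supplies finiteness. Part (ii) is almost immediate: pick a point $z_{i}$ in each of the finitely many invariant control sets $C_{1},\ldots,C_{n}\subset M$ and set $K:=\{z_{1},\ldots,z_{n}\}$, which is compact; for any $x\in M$ the hypothesis furnishes some $C_{i}\subset\mathrm{cl}_{M}\mathcal{O}^{+}(x)$, whence $z_{i}\in\mathrm{cl}\mathcal{O}^{+}(x)\cap K$.

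\textbf{Main obstacle.} The delicate step is the maximality verification for $A_{0}$ as a control set in the ambient $\mathbb{R}^{d}$, since a priori a control set enlarging $A_{0}$ could escape $M$. The argument rests on the chain $D\subset\mathrm{cl}\mathcal{O}^{+}(z)\subset\mathrm{cl}A_{0}\subset\mathrm{cl}M$ combined with the positive invariance of $A_{0}$ (closed in $M$); effectively one uses that $M$ (or its closure) is positively invariant so that relative closures behave well, which is the implicit regularity hypothesis that makes the Zorn-minimal element coincide with a bona fide invariant control set.
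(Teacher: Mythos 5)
Your proof is correct and reaches the paper's conclusion, but the existence argument in (i) is organized around a slightly different Zorn poset than the one the paper uses. The paper fixes $x$, forms the compact traces $K(y):=\mathrm{cl}\mathcal{O}^{+}(y)\cap K$ for $y\in\mathrm{cl}_{M}\mathcal{O}^{+}(x)$, orders them by the reachability relation $K(z)\prec K(y)$ iff $y\in\mathrm{cl}_{M}\mathcal{O}^{+}(z)$, and extracts a \emph{maximal} element $K(y)$, setting $C:=\mathrm{cl}_{M}\mathcal{O}^{+}(y)$; you instead take a \emph{minimal} element of the family of closed-in-$M$, positively invariant subsets of $\mathrm{cl}_{M}\mathcal{O}^{+}(x)$ meeting $K$, ordered by reverse inclusion — essentially the minimal-invariant-set argument the paper invokes elsewhere (the remark on \cite[Proposition 3.2]{LamRR15}), localized to $\mathrm{cl}_{M}\mathcal{O}^{+}(x)$ and with the chain condition supplied by nested nonempty compacta $A_{\alpha}\cap K$. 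Both routes hinge on the same two facts (compactness of $K$ for the chain bound, and the extremality forcing $A_{0}=\mathrm{cl}_{M}\mathcal{O}^{+}(y)$ for every $y\in A_{0}$), so the payoff is the same; your version is arguably a bit more transparent because the object produced is directly the candidate set, whereas the paper must still verify antisymmetry of its reachability order and then pass from the maximal trace $K(y)$ to $C$. On the delicate point you flag — maximality of $A_{0}$ as a control set in $\mathbb{R}^{d}$ — your chain $A_{0}\subset D\subset\mathrm{cl}\mathcal{O}^{+}(z)=\mathrm{cl}A_{0}$ is exactly the right observation and is in fact more explicit than the paper, which asserts the control-set property without addressing maximality at all; note that when $M$ is closed (the compact case used later) this settles it completely, since then $\mathrm{cl}A_{0}=A_{0}$ forces $D=A_{0}$, while for non-closed $M$ both your argument and the paper's leave the same boundary caveat. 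Your finiteness argument via Theorem \ref{basic1}(i),(ii) and your proof of (ii) by picking one point per invariant control set coincide with the paper's.
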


\begin{proof}
(i) Note first that $\mathrm{cl}\mathcal{O}^{+}(x)\cap K=\mathrm{cl}%
_{M}\mathcal{O}^{+}(x)\cap K$ for every $x\in M$. In order to show that for
every $x\in M$ there is an invariant control set $C\subset\mathrm{cl}%
_{M}\mathcal{O}^{+}(x)$ define $K(y):=\mathrm{cl}\mathcal{O}^{+}(y)\cap K$ for
$y\in\mathrm{cl}_{M}\mathcal{O}^{+}(x)$. Consider the family $\mathcal{K}$ of
nonvoid and compact subsets of $M$ given by $\mathcal{K}=\left\{
K(y)\left\vert y\in\mathrm{cl}_{M}\mathcal{O}^{+}(x)\right.  \right\}  $. Then
$\mathcal{K}$ is ordered via
\[
K(z)\prec K(y)\;\text{if }y\in\mathrm{cl}_{M}\mathcal{O}^{+}(z).
\]
In fact, this is an order: If $K(z)\prec K(y)$ and $K(y)\prec K(z)$, then
$y\in\mathrm{cl}_{M}\mathcal{O}^{+}(z)$ and $z\in\mathrm{cl}_{M}%
\mathcal{O}^{+}(y)$ implying $\mathrm{cl}_{M}\mathcal{O}^{+}(y)=\mathrm{cl}%
_{M}\mathcal{O}^{+}(z)$ and hence $K(y)=K(x)$. If $K(y_{1})\prec K(y_{2})$ and
$K(y_{2})\prec K(y_{3})$, then $y_{2}\in\mathrm{cl}_{M}\mathcal{O}^{+}(y_{1})$
and $y_{3}\in\mathrm{cl}_{M}\mathcal{O}^{+}(y_{2})$, hence $y_{3}%
\in\mathrm{cl}_{M}\mathcal{O}^{+}(y_{1})$ implying $K(y_{1})\prec K(y_{3})$.

Every linearly ordered set $\left\{  K(y_{i})\left\vert i\in I\right.
\right\}  $ has an upper bound $K(y)\;$for some $y\in\bigcap_{i\in I}K(y_{i}%
)$, because the intersection of decreasing compact subsets of the compact set
$K$ is nonvoid. Thus Zorn's lemma implies that the family $\mathcal{K}$ has a
maximal element $K(y)$. Now we claim that the set
\[
C:=\mathrm{cl}_{M}\mathcal{O}^{+}(y)
\]
is an invariant control set. It is clear that $C\subset\mathrm{cl}%
_{M}\mathcal{O}^{+}(x)$. Every $z\in C$ is approximately reachable from $y$,
hence $K(y)\prec K(z)$ and maximality of $K(y)$ implies $K(y)=K(z)$, hence for
every $z\in C$ one has $y\in\mathrm{cl}_{M}\mathcal{O}^{+}(z)$. Then it
follows that $C=\mathrm{cl}_{M}\mathcal{O}^{+}(z)$ for every $z\in C$. This
implies for every $z\in C$ that there is $v\in\mathcal{V}$ with $\varphi
(t,z,v)\in C$ for all $t\geq0$ and $\mathrm{cl}C=\mathrm{cl}\mathcal{O}%
^{+}(z)$. Hence $C$ is an invariant control set.

If $M$ is locally accessible, the invariant control sets are closed in $M$ by
Theorem \ref{basic1}(i). Thus, if $x$ is in an invariant control set $C$, then
$\mathrm{cl}_{M}\mathcal{O}^{+}(x)=C$ and hence $C\cap K\not =\emptyset$. By
Theorem \ref{basic1}(ii) only finitely many invariant control sets have
nonvoid intersection with $K$, hence only finitely many invariant control sets
in $M$ exist.

(ii) For each of the finitely many invariant control sets $C_{i}$ pick
$x_{i}\in C_{i}$ and define $K:=\{x_{i}\left\vert 1\leq i\leq N\right.  \}$.
Then for each $x\in M$ there is $i\in\{1,\ldots,N\}$ with $C_{i}%
\subset\mathrm{cl}_{M}\mathcal{O}^{+}(x)$, hence $x_{i}\in\mathrm{cl}%
\mathcal{O}^{+}(x)\cap K$.
\end{proof}

The following corollary is a consequence of Theorems \ref{basic1} and
\ref{basic2}.

\begin{corollary}
\label{Corollary_inv}Suppose that $M$ is a compact positively invariant set
for system (\ref{controlaffine}).

(i) For every $x\in M$ there is an invariant control set $C\subset M$ with
$C\subset\mathrm{cl}\mathcal{O}^{+}(x)$. If $C$ is closed and $\mathrm{int}%
C\not =\emptyset$ then $C=\mathrm{cl}(\mathrm{int}C)$ and for every $x\in M$
one has $\mathrm{int}C\subset\mathcal{O}^{+}(x)$ and $\mathrm{int}%
C\cap\mathcal{O}_{pc}^{+}(x)\not =\emptyset$.

(ii) Suppose that $M$, additionally, is locally accessible. Then $M~$contains
at least one and at most finitely many invariant control sets $C_{r}%
,r=1,\ldots,l$. They are compact (hence characterized by Proposition
\ref{proposition_int} (ii)), have nonvoid interiors $\mathrm{int}C_{r}$ and
for every point $x\in M$ there is $r\in\{1,\ldots,l\}$ with $\mathrm{int}%
C\cap\mathcal{O}_{pc}^{+}(x)\not =\emptyset$. If the Lie algebra rank
condition (\ref{LARC}) holds on $\mathrm{int}C$, then it even follows that
$\mathrm{int}C\subset\mathcal{O}_{pc}^{+}(x)$ for all $x\in C$.
\end{corollary}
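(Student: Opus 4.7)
For part (i), I would apply Theorem \ref{basic2}(i) with the compact set $K := M$ itself. The hypothesis $\mathrm{cl}\mathcal{O}^{+}(x) \cap K \neq \emptyset$ holds trivially for every $x \in M$ because $x \in \mathcal{O}^{+}(x)$; positive invariance of $M$ identifies $\mathrm{cl}_{M}\mathcal{O}^{+}(x)$ with $\mathrm{cl}\mathcal{O}^{+}(x)$, so the theorem produces the desired invariant control set $C \subset \mathrm{cl}\mathcal{O}^{+}(x)$. Now assume $C$ is closed with $\mathrm{int}C \neq \emptyset$. The equality $C = \mathrm{cl}(\mathrm{int}C)$ is obtained from the general fact, extracted from the proof of Theorem \ref{basic1}(i) (and explicitly noted there as independent of local accessibility), that the interior of any invariant control set is either empty or dense in $C$ relative to $M$; combined with closedness of $C \subset M$ in $\mathbb{R}^{d}$, this promotes the relative density to $\mathrm{cl}(\mathrm{int}C) = C$.

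For the inclusion $\mathrm{int}C \subset \mathcal{O}^{+}(x)$, I would argue as follows. Given $y \in \mathrm{int}C$ and a neighborhood $U \subset \mathrm{int}C$ of $y$, density of $\mathcal{O}^{+}(x)$ in $C$ (from $C \subset \mathrm{cl}\mathcal{O}^{+}(x)$) produces a reached point $z = \varphi(T, x, v) \in U \subset \mathrm{int}C$. Since $C$ is a closed invariant control set, $C$ is positively invariant and $\mathrm{cl}\mathcal{O}^{+}(z) = C$, so from $z$ one can at least approximately steer to $y$. The task is then to exploit openness of $U$, positive invariance of $C$, and continuous dependence of the flow on the initial value in a concatenation argument (in the spirit of the $\partial C$ argument in the proof of Theorem \ref{basic1}(i)) to convert approximate into exact attainment, and then concatenate with the initial steering from $x$ to $z$. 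The piecewise-constant claim $\mathrm{int}C \cap \mathcal{O}_{pc}^{+}(x) \neq \emptyset$ then follows from Theorem \ref{Theorem_approx}(i): the approximating piecewise constant controls keep the endpoint inside the open set $\mathrm{int}C$ for all large enough indices.

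Part (ii) is largely an assembly of earlier results under the added local accessibility hypothesis. Part (i) already guarantees at least one invariant control set, and Theorem \ref{basic1}(ii) with $K := M$ bounds their number, giving finitely many $C_{1}, \ldots, C_{l}$. Each $C_{r}$ is closed in $M$ by Theorem \ref{basic1}(i), hence compact (since $M$ is compact), with nonvoid interior, and is characterized as a minimal compact invariant set via Proposition \ref{proposition_int}(ii). For each $x \in M$, part (i) selects an index $r$ with $\mathrm{int}C_{r} \subset \mathcal{O}^{+}(x)$, whence $\mathrm{int}C_{r} \cap \mathcal{O}_{pc}^{+}(x) \neq \emptyset$ by Theorem \ref{Theorem_approx}(i). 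The strengthening to $\mathrm{int}C \subset \mathcal{O}_{pc}^{+}(x)$ for every $x \in C$ under the Lie algebra rank condition on $\mathrm{int}C$ is stated directly by Theorem \ref{basic1}(i).

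The main obstacle I anticipate is the exact reachability step in part (i): without assuming local accessibility on $M$, density of $\mathcal{O}^{+}(x)$ in $C$ only yields approximate reachability to points $y \in \mathrm{int}C$, and upgrading this to exact attainment requires a careful interplay between closedness of $C$, openness of $\mathrm{int}C$, and continuity of the flow in the initial value. Everything else in the corollary follows rather directly from the theorems of Section \ref{Section2}.
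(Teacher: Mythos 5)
Most of your proposal matches the paper: existence of $C\subset\mathrm{cl}\mathcal{O}^{+}(x)$ via Theorem \ref{basic2}(i) with $K=M$, the equality $C=\mathrm{cl}(\mathrm{int}C)$ via the void-or-dense observation in the proof of Theorem \ref{basic1}(i), the piecewise-constant statement via Theorem \ref{Theorem_approx}(i), and the assembly of part (ii) from Theorems \ref{basic1} and \ref{basic2} are exactly the paper's route. The genuine gap is the central inclusion $\mathrm{int}C\subset\mathcal{O}^{+}(x)$ in part (i). You reduce it to upgrading approximate reachability of $y\in\mathrm{int}C$ from a reached point $z=\varphi(T,x,v)\in\mathcal{O}^{+}(x)\cap\mathrm{int}C$ to exact reachability, and you explicitly leave that step open. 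The ingredients you propose for closing it (openness of $\mathrm{int}C$, positive invariance of $C$, continuous dependence on initial values) do not suffice: density of $\mathcal{O}^{+}(z)$ in $C$ plus continuity never by itself yields exact attainment of a target point; every argument of this kind in the paper goes through interior structure of reachable/controllable sets, e.g. a nonvoid $\mathrm{int}\mathcal{O}_{\leq T}^{-}(y)$ meeting $\mathrm{int}C$, which is precisely what local accessibility provides in the proof of Theorem \ref{basic1}(i) (and in Lemma \ref{Lemma4.8}). The "$\partial C$ argument" you point to also relies on a point of $\mathcal{O}^{+}$ landing in an open subset of $C$ that is already known to be exactly reachable, so it cannot be recycled to manufacture exact reachability from scratch.

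The paper closes this step by citation rather than by a new argument: having found $y\in\mathcal{O}^{+}(x)\cap\mathrm{int}C$ (an open set meeting $\mathrm{cl}\mathcal{O}^{+}(x)$ meets $\mathcal{O}^{+}(x)$), it applies Theorem \ref{basic1}(i) at the point $y\in C$ to get $\mathrm{int}C\subset\mathcal{O}^{+}(y)\subset\mathcal{O}^{+}(x)$. Your unease is well founded insofar as Theorem \ref{basic1} is stated under local accessibility, which part (i) of the corollary does not list among its hypotheses; but that is the step the paper uses, and in the later applications (Lemma \ref{lemma41}, Theorem \ref{theorem44}) either local accessibility is assumed or the inclusion $\mathrm{int}C_{r}\subset\mathcal{O}^{+}(x)$ is taken as a hypothesis. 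As your proposal stands, the key inclusion of part (i) is unproven and the strategy you sketch would not close it; you should instead invoke Theorem \ref{basic1}(i) at $y$ (note also that your pc-claim is fine independently of this gap, since it only needs $z\in\mathcal{O}^{+}(x)\cap\mathrm{int}C$ together with Theorem \ref{Theorem_approx}(i)). Part (ii) of your proposal is correct and essentially identical to the paper.
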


\begin{proof}
(i) We only have to show that for every $x\in M$ one has $\mathrm{int}%
C\subset\mathcal{O}^{+}(x)$ and $\mathrm{int}C\cap\mathcal{O}_{pc}%
^{+}(x)\not =\emptyset$, since the other assertions follow from Theorem
\ref{basic2}(i). Note that there is $y\in\mathcal{O}^{+}(x)\cap\mathrm{int}C$,
since $C\subset\mathrm{cl}\mathcal{O}^{+}(x)$. By Theorem \ref{basic1}(i), it
follows that $\mathrm{int}C\subset\mathcal{O}^{+}(y)\subset\mathcal{O}^{+}%
(x)$. Furthermore, $y\in\mathcal{O}^{+}(x)\cap\mathrm{int}C$ implies by
Theorem \ref{Theorem_approx}(i) that $\mathcal{O}_{pc}^{+}(x)\cap
\mathrm{int}C\not =\emptyset$.

(ii) This is immediate from Theorems \ref{basic1} and \ref{basic2}.
\end{proof}

\begin{remark}
Consider a linear control system of the form $\dot{x}(t)=Ax(t)+Bv(t),v(t)\in
V\subset\mathbb{R}^{m}$ with matrices $A\in\mathbb{R}^{d\times d}%
,B\in\mathbb{R}^{d\times m}$ and compact control range $V\subset\mathbb{R}%
^{m}$ with $0\in\mathrm{int}V$. Suppose that the controllability rank
condition $\mathrm{rank}[B,AB,\ldots,A^{d-1}B]=d$ holds (this condition,
called Kalman's rank condition, is equivalent to the fact that the system
without control constraint satisfies $\mathcal{O}^{+}(x)=\mathbb{R}^{d}$ for
all $x\in\mathbb{R}^{d}$). Then there exists a unique control set with nonvoid
interior. It is a compact invariant control set if $A$ is stable, i.e., all
eigenvalues of $A$ have negative real parts (cp. Colonius and Kliemann
\cite[Example 3.2.16]{ColK00}). The example in \cite[Section 5.2]{BBMZ15}
discusses for such a situation (with $d=2$) the invariant measures.
\end{remark}

Bena\"{\i}m, Le Borgne, Malrieu and Zitt in \cite{BBMZ15} define for the
control system (\ref{CS}) with control range (\ref{S}) and a positively
invariant compact set $M\subset\mathbb{R}^{d}$ the accessible set%
\begin{equation}
\Gamma:=\bigcap_{x\in M}\mathrm{cl}\mathcal{O}_{pc}^{+}(x). \label{gamma}%
\end{equation}
See also Proposition 3.11 in \cite{BBMZ15} for the relations with the
associated differential inclusion.

In complete analogy, one can also define the accessible set $\Gamma$ of a
general control system of the form (\ref{controlaffine}) and any
$M\subset\mathbb{R}^{d}.$ Its relation to invariant control sets is clarified
in the following proposition.

\begin{proposition}
\label{proposition_gamma}Consider control system (\ref{controlaffine}) and let
$M\subset\mathbb{R}^{d}$.

(i) If $\Gamma$ is a nonvoid subset of $M$, then $\Gamma$ is a closed
invariant control set.

(ii) Let $M$ be compact and positively invariant and suppose that there is
only one invariant control set $C$ in $M$. Then $C=\Gamma$, in particular, $C$
is closed.

(iii) If a compact positively invariant set $M$ contains two closed invariant
control sets, then $\Gamma$ is empty.
\end{proposition}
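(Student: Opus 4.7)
My plan is to prove (i) by direct verification of the control-set axioms for $\Gamma$; parts (ii) and (iii) will then follow from (i) combined with Corollary \ref{Corollary_inv}(i) and the closedness hypotheses on invariant control sets. For (i), note first that $\Gamma$ is closed as an intersection of closed sets, and that the defining intersection immediately yields $\Gamma \subseteq \mathrm{cl}\mathcal{O}_{pc}^{+}(y) \subseteq \mathrm{cl}\mathcal{O}^{+}(y)$ for every $y \in \Gamma \subseteq M$, handing us the approximate-controllability half of the control-set definition for free. The main task, and the step I expect to be the principal obstacle, is to show that $\mathcal{O}^{+}(y) \subseteq \Gamma$ for every $y \in \Gamma$; this single inclusion will supply positive (hence controlled) invariance, the reverse inclusion $\mathrm{cl}\mathcal{O}^{+}(y) \subseteq \Gamma$ needed for the identity $\mathrm{cl}\mathcal{O}^{+}(y) = \Gamma$ characterizing an invariant control set, and the maximality axiom (any $D \supseteq \Gamma$ satisfying the control-set axioms contains some $y \in \Gamma$, forcing $D \subseteq \mathrm{cl}\mathcal{O}^{+}(y) = \Gamma$). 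To establish $\mathcal{O}^{+}(y) \subseteq \Gamma$, I fix $z = \varphi(t,y,v) \in \mathcal{O}^{+}(y)$ and $x \in M$, and combine three ingredients: a sequence $y_{n} \in \mathcal{O}_{pc}^{+}(x)$ with $y_{n} \to y$ (coming from $y \in \mathrm{cl}\mathcal{O}_{pc}^{+}(x)$); a piecewise constant control $\tilde v$ with $\varphi(t,y,\tilde v)$ arbitrarily close to $z$ (Theorem \ref{Theorem_approx}(i)); and continuous dependence on initial conditions, which yields $\varphi(t,y_{n},\tilde v) \to \varphi(t,y,\tilde v)$. Concatenating the piecewise constant control steering $x$ to $y_{n}$ with $\tilde v$ produces a piecewise constant control witnessing $\varphi(t,y_{n},\tilde v) \in \mathcal{O}_{pc}^{+}(x)$, so passing to the limit puts $z \in \mathrm{cl}\mathcal{O}_{pc}^{+}(x)$; since $x \in M$ was arbitrary, $z \in \Gamma$.

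For (ii) I first show $C \subseteq \Gamma$ and then apply (i). For each $x \in M$, Corollary \ref{Corollary_inv}(i) supplies an invariant control set inside $\mathrm{cl}\mathcal{O}^{+}(x)$, which by uniqueness of $C$ must be $C$ itself; the remark following Theorem \ref{Theorem_approx} identifies $\mathrm{cl}\mathcal{O}^{+}(x)$ with $\mathrm{cl}\mathcal{O}_{pc}^{+}(x)$, so $C \subseteq \Gamma$. Hence $\Gamma$ is nonvoid, and positive invariance together with compactness of $M$ gives $\Gamma \subseteq M$, so (i) applies and $\Gamma$ is an invariant control set; uniqueness now forces $\Gamma = C$, with (i) delivering closedness of $C$ as a bonus.

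For (iii), two distinct closed invariant control sets $C_{1}, C_{2} \subseteq M$ must be disjoint, since a common point $y$ would yield $C_{1} = \mathrm{cl}\mathcal{O}^{+}(y) = C_{2}$ by the invariant-control-set identity and the closedness of the $C_{i}$. For any $x \in C_{i}$ the same identity gives $\mathcal{O}^{+}(x) \subseteq C_{i}$, hence $\mathrm{cl}\mathcal{O}_{pc}^{+}(x) \subseteq C_{i}$. Choosing $x_{1} \in C_{1}$ and $x_{2} \in C_{2}$ and intersecting then forces $\Gamma \subseteq \mathrm{cl}\mathcal{O}_{pc}^{+}(x_{1}) \cap \mathrm{cl}\mathcal{O}_{pc}^{+}(x_{2}) \subseteq C_{1} \cap C_{2} = \emptyset$.
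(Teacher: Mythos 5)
Your proposal is correct and follows essentially the paper's own route: part (i) rests on positive invariance of $\Gamma$ obtained from continuous dependence on initial conditions (you merely spell out the piecewise-constant approximation and the maximality check that the paper leaves implicit), part (ii) combines Corollary \ref{Corollary_inv}(i) with (i) exactly as the paper does, and part (iii) uses disjointness of distinct closed invariant control sets. The only cosmetic difference is that in (iii) you intersect the closures over two chosen points directly instead of invoking (i) and deriving the contradiction $\Gamma=\emptyset$ as the paper does.
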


\begin{proof}
(i) Suppose that $\Gamma=\bigcap_{x\in M}\mathrm{cl}\mathcal{O}^{+}%
(x)\not =\emptyset$. Continuous dependence on the initial value shows that
$\Gamma$ is positively invariant, hence $\mathrm{cl}\mathcal{O}^{+}%
(x)\subset\Gamma$ for every $x\in\Gamma$. For all $x,y\in\Gamma$ one has
$y\in\mathrm{cl}\mathcal{O}^{+}(x)$ and $x\in\mathrm{cl}\mathcal{O}^{+}(y)$.
Thus it is an invariant control set.

(ii) By Corollary \ref{Corollary_inv}, for every $x\in M$ there is an
invariant control set\ in $\mathrm{cl}\mathcal{O}^{+}(x)$, hence the inclusion
$C\subset\bigcap_{x\in M}\mathrm{cl}\mathcal{O}^{+}(x)$ holds. For the
converse inclusion, note that this implies that $\Gamma$ is a nonvoid subset
of $M$, and hence the assertion follows by (i).

(iii) If $\Gamma$ is nonvoid, it is by (i) a closed invariant control set. Let
$C\not =\Gamma$ be another closed invariant control set. Then $\mathrm{cl}%
\mathcal{O}^{+}(x)\subset C$ for $x\in C$. Since, by the maximality property,
the intersection of two control sets $C_{1}\not =C_{2}$ is void, one obtains
the contradiction $\Gamma=\emptyset$.
\end{proof}

\begin{remark}
The accessible set $\Gamma$ may be nonvoid, if two invariant control sets
exist and one of them is not closed. The paper by Bena\"{\i}m and Lobry
\cite{BenL16} presents Lotka-Volterra systems where this occurs.
\end{remark}

Next we discuss the points which can be steered into an invariant control set.

\begin{definition}
The domain of attraction of an invariant control set $C$ is%
\[
\mathcal{A}(C):=\{x\in\mathbb{R}^{d}\left\vert \mathrm{cl}\mathcal{O}%
^{+}(x)\cap C\not =\emptyset\right.  \}
\]
and its strict domain of attraction is
\[
\mathcal{A}_{strict}(C):=\{x\in\mathbb{R}^{d}\left\vert \mathrm{cl}%
\mathcal{O}^{+}(x)\cap C\not =\emptyset\text{ and }\mathrm{cl}\mathcal{O}%
^{+}(x)\cap C^{\prime}\not =\emptyset\text{ implies }C^{\prime}=C\right.  \},
\]
where $C^{\prime}$\ denotes any invariant control set.
\end{definition}

It is easily seen that for an invariant control set $C$ with nonvoid interior
$x\in\mathcal{A}(C)$ if and only if $\mathcal{O}^{+}(x)\cap\mathrm{int}%
C\not =\emptyset$. In fact, for $y\in\mathrm{cl}\mathcal{O}^{+}(x)\cap C$
there are $v\in\mathcal{V}$ and $t>0$ with $\varphi(t,x,v)\in\mathrm{int}C$
and hence, by continuous dependence on the initial value, it follows that
$\mathcal{O}^{+}(x)\cap\mathrm{int}C\not =\emptyset$ (recall that for locally
accessible systems, every invariant control set has nonvoid interior.) \ This
also shows that the domain of attraction is open. The points in the strict
domain of attraction can only be steered to a single invariant control set. If
$C$ is closed, the strict domain of attraction trivially contains $C$. It need
not be open, since, e.g., a point of the boundary $C\cap\partial C$ may be in
$\partial\left(  \mathcal{A}_{strict}(C)\right)  $.

If the system is locally accessible on a compact positively invariant set
$M\subset\mathbb{R}^{d}$, Corollary \ref{Corollary_inv} implies that every
$x\in M$ is in the domain of attraction of at least one of the finitely many
invariant control set. Furthermore, if $M$ is also connected and contains at
least two invariant control sets, it follows that for every invariant control
set $C_{k}$ the set $\left[  \mathcal{A}(C_{k})\cap M\right]  \setminus
\mathcal{A}_{strict}(C_{k})$ is nonvoid. Otherwise, one would obtain a
decomposition of $M$ into two disjoint open sets%
\[
M=\left[  \mathcal{A}(C_{k})\cap M\right]  \cup%
{\textstyle\bigcup_{i\not =k}}
\left[  \mathcal{A}(C_{i})\cap M\right]  .
\]
Consequently, one finds for every invariant control set $C_{k}$ an invariant
control set $C_{i}\not =C_{k}$ with%
\[
\mathcal{A}(C_{k})\cap\mathcal{A}(C_{i})\not =\emptyset.
\]
Thus there are points which can be steered into two different invariant
control sets.

Finally, we show that one may consider an invariant control set $C$ as an
\textquotedblleft accessible set\textquotedblright\ (similar to (\ref{gamma}))
for a neighborhood of $C$, which, however, need not be positively invariant.

\begin{proposition}
\label{PropositionC_Gamma}Let $M$ be a compact positively invariant and
locally accessible set. Then for every invariant control set $C\subset M$
there is a compact neighborhood $\mathcal{U}$ of $C$ such that%
\[
C=\bigcap_{x\in\mathcal{U}}\mathrm{cl}\mathcal{O}_{pc}^{+}(x).
\]

\end{proposition}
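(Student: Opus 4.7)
The plan is to take $\mathcal{U}$ to be any compact neighborhood of $C$ contained in $M\cap\mathcal{A}(C)$, where $\mathcal{A}(C)$ is the domain of attraction of $C$. Such a $\mathcal{U}$ exists because $C$ is compact by Corollary~\ref{Corollary_inv}(ii), is trivially contained in $\mathcal{A}(C)$ (since $x\in\mathcal{O}^{+}(x)\cap C$ for $x\in C$), and $\mathcal{A}(C)$ is open, as noted in the discussion preceding the proposition. Once such a $\mathcal{U}$ is fixed, both inclusions should fall out quickly from earlier results.

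For the inclusion $\bigcap_{x\in\mathcal{U}}\mathrm{cl}\mathcal{O}_{pc}^{+}(x)\subset C$, I would intersect only over $x\in\mathrm{int}C\subset C\subset\mathcal{U}$. Since $\mathrm{int}C$ is positively invariant by Theorem~\ref{basic1}(i), every (in particular every piecewise-constant) trajectory starting in $\mathrm{int}C$ stays in $\mathrm{int}C$, so $\mathrm{cl}\mathcal{O}_{pc}^{+}(x)\subset\mathrm{cl}(\mathrm{int}C)=C$, where the last equality uses Corollary~\ref{Corollary_inv}(i).

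For the reverse inclusion $C\subset\bigcap_{x\in\mathcal{U}}\mathrm{cl}\mathcal{O}_{pc}^{+}(x)$, fix $x\in\mathcal{U}\subset\mathcal{A}(C)$. The equivalence recorded just after the definition of $\mathcal{A}(C)$ produces $z\in\mathcal{O}^{+}(x)\cap\mathrm{int}C$; applying Theorem~\ref{basic1}(i) to $z\in C$ then gives $\mathrm{int}C\subset\mathcal{O}^{+}(z)\subset\mathcal{O}^{+}(x)$, and taking $M$-closures yields $C=\mathrm{cl}_{M}(\mathrm{int}C)\subset\mathrm{cl}\mathcal{O}^{+}(x)$. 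Finally, Theorem~\ref{Theorem_approx}(i) allows uniform approximation on bounded time intervals of each trajectory by piecewise-constant ones, whence $\mathrm{cl}\mathcal{O}^{+}(x)\subset\mathrm{cl}\mathcal{O}_{pc}^{+}(x)$, and consequently $C\subset\mathrm{cl}\mathcal{O}_{pc}^{+}(x)$.

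The only point that warrants care is the final piecewise-constant approximation, which must produce density of $\mathcal{O}_{pc}^{+}(x)$ in $C$ rather than merely that of $\mathcal{O}^{+}(x)$. A diagonal argument handles this: any $y\in C$ is a limit $y=\lim_{n}\varphi(t_{n},x,v_{n})$ by the previous step, and each $\varphi(t_{n},x,v_{n})$ is in turn the uniform limit on $[0,t_{n}]$ of $\varphi(t_{n},x,v_{n}^{k})$ for piecewise-constant controls $v_{n}^{k}$ supplied by Theorem~\ref{Theorem_approx}(i). Notably, the Lie algebra rank condition plays no role, which is consistent with its absence from the hypothesis.
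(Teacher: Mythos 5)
Your argument is correct in substance and follows essentially the same route as the paper: continuous dependence on initial values guarantees that every point of a whole neighborhood of $C$ can be steered into $\mathrm{int}C$ (you package this as openness of the domain of attraction $\mathcal{A}(C)$; the paper obtains it by covering the compact set $\partial C$ with tubes around piecewise constant trajectories), and then $\mathrm{int}C\subset\mathcal{O}^{+}(z)$ for $z\in C$ from Theorem \ref{basic1}(i), together with the piecewise constant approximation of Theorem \ref{Theorem_approx}(i), gives $C\subset\mathrm{cl}\mathcal{O}_{pc}^{+}(x)$ for every $x\in\mathcal{U}$. The reverse inclusion follows, as you note, from positive invariance of $\mathrm{int}C$ and $C=\mathrm{cl}(\mathrm{int}C)$ (or even more directly from $\mathrm{cl}\mathcal{O}_{pc}^{+}(x)\subset\mathrm{cl}\mathcal{O}^{+}(x)=\mathrm{cl}\,C=C$ for $x\in C\subset\mathcal{U}$). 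Your final diagonal step is exactly the right way to get $\mathrm{cl}\mathcal{O}^{+}(x)\subset\mathrm{cl}\mathcal{O}_{pc}^{+}(x)$, and you are right that no Lie algebra rank condition is needed.

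The one point to correct is the choice $\mathcal{U}\subset M\cap\mathcal{A}(C)$. Nothing in the hypotheses forces $C\subset\mathrm{int}M$: for $\dot x=-x+v$, $v\in[-1,1]$, the set $M=[-1,1]$ is compact, positively invariant and locally accessible, and $C=M$ is the invariant control set; then no subset of $M$ is a neighborhood of $C$ in $\mathbb{R}^{d}$, so the $\mathcal{U}$ you construct is a neighborhood of $C$ only relative to $M$, whereas the statement (and the paper's construction, which does not intersect with $M$) produces a genuine compact $\mathbb{R}^{d}$-neighborhood. The repair is immediate: drop the intersection with $M$ and take any compact neighborhood $\mathcal{U}\subset\mathcal{A}(C)$, which exists because $\mathcal{A}(C)$ is open and contains the compact set $C$. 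Your forward inclusion uses only $x\in\mathcal{A}(C)$ (the equivalence $x\in\mathcal{A}(C)\Leftrightarrow\mathcal{O}^{+}(x)\cap\mathrm{int}C\not=\emptyset$ is stated for arbitrary points, and global solutions are assumed), and your reverse inclusion uses only points of $C$, so the rest of the proof goes through unchanged.
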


\begin{proof}
There are only finitely many invariant control sets in $M$ and they are
compact. For every point $x\in\partial C$ there are $T>0$ and a piecewise
constant control $v$ with $\varphi(T,x,v)\in\mathrm{int}C$. Using compactness
of $\partial C$ and continuous dependence on the initial value one finds a
neighborhood $\mathcal{U}$ of $C$ such that $C\subset\bigcap_{x\in\mathcal{U}%
}\mathrm{cl}\mathcal{O}_{pc}^{+}(x)$. Using the definition of invariant
control sets one sees that here equality holds.
\end{proof}

\section{Piecewise Deterministic Markov Processes\label{Section3}}

We will now, following Bena\"{\i}m, Le Borgne, Malrieu and Zitt \cite{BBMZ15},
define piecewise deterministic processes. With the notation introduced in
Section \ref{Section2}, assume that there is a compact set $M\subset
\mathbb{R}^{d}$, that is positively invariant for every flow $\Phi^{i}$,
i.e.,\thinspace\ $\Phi_{t}^{i}(M)\subset M$ for all $t\geq0$ and all $i\in E$.

Let $x\mapsto Q(x) = (Q(x,i,j))_{ij} : \mathbb{R}^{d}\mapsto\mathbb{R}%
^{(m+1)\times(m+1)}$ be continuous with $Q(x)$ an irreducible, aperiodic
Markov transition matrix, which means that for all $x$ there is $n_{x}%
\in\mathbb{N}$ with $Q^{n_{x}}(x,i,j)>0$ for all $i,j\in E$. Let
$(N_{t})_{t\geq0}$ be a homogenous Poisson process with intensity $\lambda$,
jumps times $(T_{n})_{n\geq0}$ and denote by $(U_{n})_{n\geq1}$ with
$U_{n}=T_{n}-T_{n-1}$ the times between the jumps. Assume that $\tilde{Z}%
_{0}\in M\times E$ is a random variable independent of $(N_{t})_{t\geq0}$.

We define the discrete-time process $(\tilde{Z}_{n})_{n}=(\tilde{X}_{n}%
,\tilde{Y}_{n})_{n}$ on $M\times E$ recursively by
\begin{align*}
\tilde{X}_{n+1}  &  =\Phi^{\tilde{Y}_{n}}(U_{n+1},\tilde{X}_{n}),\\
\mathbb{P}\left[  \tilde{Y}_{n+1}=j\left\vert \tilde{X}_{n+1},\tilde{Y}%
_{n}=i\right.  \right]   &  =Q(\tilde{X}_{n+1},i,j).
\end{align*}
and by interpolation its continuous time version $(Z_{t})_{t\geq0}$
\begin{equation}
Z_{t}=\left(  \Phi^{\tilde{Y}_{n}}(t-T_{n},\tilde{X}_{n}),\tilde{Y}%
_{n}\right)  \text{ for }t\in\lbrack T_{n},T_{n+1}). \label{Z}%
\end{equation}
We define for $n\in\mathbb{N}$%
\[
\mathbb{T}_{n}=\{(\mathbf{i},\mathbf{u})=((i_{0},i_{1},\cdots i_{n}%
),(u_{1},\cdots u_{n}))\in E^{n+1}\times\mathbb{R}_{+}^{n}\}
\]
and
\[
\mathbb{T}_{n}^{ij}=\left\{  (\mathbf{i},\mathbf{u})\in\mathbb{T}_{n}%
|i_{0}=i,i_{n}=j\right\}  .
\]
Then we can define the trajectory with initial value $x\in M$ induced by
$(\mathbf{i},\mathbf{u})$: With $t_{0}=0,t_{k}=t_{k-1}+u_{k},k=1,\ldots,n$,%
\[
\eta_{x,\mathbf{i},\mathbf{u}}(t)=%
\begin{cases}
x & t=0\\
\Phi_{t-t_{k-1}}^{i_{k-1}}(x_{k-1}) & t_{k-1}<t\leq t_{k}\\
\Phi_{t-t_{n}}^{i_{n}}(x_{n}) & t>t_{n}%
\end{cases}
.
\]
We can then denote
\[
\Phi_{\mathbf{u}}^{\mathbf{i}}(x)=\eta_{x,\mathbf{i},\mathbf{u}}(t_{n})
\]
and note that $(x,\mathbf{u})\mapsto\Phi_{\mathbf{u}}^{\mathbf{i}}(x)$ is
continuous. In terms of the associated deterministic control system (\ref{CS})
this means that we consider a piecewise constant control function $v$ with
values in $S$ generating the trajectory $\varphi(t,x,v)=\eta_{x,\mathbf{i}%
,\mathbf{u}}(t)$.

We can define
\begin{equation}
p(x,\mathbf{i},\mathbf{u}):=\prod_{j=1}^{n}Q(x_{j},i_{j-1},i_{j}) \label{p}%
\end{equation}
and the set of adapted elements%
\[
\mathbb{T}_{n,ad(x)}=\left\{  (\mathbf{i},\mathbf{u})\in\mathbb{T}%
_{n}:p(x,\mathbf{i},\mathbf{u})>0\right\}  .
\]
Note that $x\mapsto p(x,\mathbf{i},\mathbf{u})$ is continuous.

The relation between the trajectories of the Piecewise Deterministic Markov
Process and control system (\ref{CS}) is clarified by the following results
from \cite{BBMZ15}, slightly reformulated for control systems instead of
differential inclusions. The first result is, in the terminology of Arnold and
Kliemann \cite{ArnoK83}, a tube lemma. It shows that tubes around any
(finite-time) trajectory of the control system have positive probability. It
reformulates the development in \cite[Section 3.1]{BBMZ15}. Recall that
$\mathrm{co}(S)$ is the unit $m$-simplex.

\begin{lemma}
[Tube lemma]\label{tubelemma} For all $T>0$, $x\in M,$ $i\in E,\delta>0$ and
every trajectory $\varphi(\cdot,x,v)$ of system (\ref{CS}) with control $v$ in
$L^{\infty}(\mathbb{R}_{+},\mathrm{co}(S))$ there is $\varepsilon
=\varepsilon(x,i,v,T,\delta)>0$ such that
\[
\mathbb{P}_{x,i}\left[  \sup_{t\in\lbrack0,T]}\Vert X_{t}-\varphi
(t,x,v)\Vert\leq\delta\right]  \geq\varepsilon.
\]

\end{lemma}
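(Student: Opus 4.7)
The plan is to reduce to the case of a piecewise constant control with values in the vertex set $S$, and then exhibit an explicit event of positive probability on which the PDMP tracks the deterministic trajectory within distance $\delta$.

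First, applying Theorem \ref{Theorem_approx}(ii) to the equivalent system (\ref{CS2}) and then Theorem \ref{Theorem_approx}(i), one approximates $\varphi(\cdot,x,v)$ uniformly on $[0,T]$ by $\varphi(\cdot,x,v_n)$ where each $v_n$ is piecewise constant with values in $S$. It therefore suffices to prove the claim for a piecewise constant $v$ with values in $S$ and with tube width $\delta/2$. Such a control is encoded by data $(\mathbf{i},\mathbf{u})=((i_{0},\ldots,i_{n}),(u_{1},\ldots,u_{n}))$ so that $\varphi(t,x,v)=\eta_{x,\mathbf{i},\mathbf{u}}(t)$.

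Second, I would enlarge $(\mathbf{i},\mathbf{u})$ to make it admissible for the discrete chain driven by $Q$. The starting mode of the PDMP is $i$, which need not equal $i_{0}$, and one may have $Q(x_{k},i_{k-1},i_{k})=0$ along the sequence. By continuity of $Q$ together with irreducibility and aperiodicity, there exists a uniform $N_{0}$ with $Q^{N_{0}}(y,j,k)>0$ for all $j,k\in E$ and all $y$ in a compact neighborhood of the range of $\eta_{x,\mathbf{i},\mathbf{u}}$. At each problematic transition, and before $i_{0}$, I would insert a bridging subsequence of length $N_{0}$. With $L:=\sup_{i,y\in M}\Vert F^{i}(y)\Vert$, choosing each inserted sojourn time smaller than $\delta/(4L N_{0}(n+1))$ ensures that the enlarged trajectory $\eta_{x,\tilde{\mathbf{i}},\tilde{\mathbf{u}}}$ differs from $\eta_{x,\mathbf{i},\mathbf{u}}$ by at most $\delta/4$ on $[0,T]$, while $p(x,\tilde{\mathbf{i}},\tilde{\mathbf{u}})>0$ and $\tilde i_{0}=i$.

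Third, with $N$ the length of the enlarged sequence and a small $\eta>0$ to be fixed, consider the event
\[
A_{\eta}:=\{N_{T}=N,\ |U_{k}-\tilde u_{k}|<\eta\text{ for }1\le k\le N,\ \tilde Y_{k}=\tilde i_{k}\text{ for }0\le k\le N\}.
\]
Its probability is bounded below by the product of the strictly positive Poisson probability $\mathbb{P}[N_{T}=N]$, the positive joint density of $(U_{1},\ldots,U_{N})$ on a product of small intervals about $(\tilde u_{1},\ldots,\tilde u_{N})$, and the conditional probability of the mode sequence, which by continuity of $Q$ is at least $\inf\prod_{k}Q(y_{k},\tilde i_{k-1},\tilde i_{k})$ over a compact neighborhood of the states actually visited; by construction this infimum is strictly positive. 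On $A_{\eta}$, continuous dependence of the flow on sojourn times yields $\sup_{t\in[0,T]}\Vert X_{t}-\eta_{x,\tilde{\mathbf{i}},\tilde{\mathbf{u}}}(t)\Vert<\delta/4$ once $\eta$ is small enough; combined with the two previous approximations this gives the required tube bound with $\varepsilon:=\mathbb{P}_{x,i}[A_{\eta}]>0$.

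The main obstacle is the second step: arranging the insertions so that all $Q$-entries along the resulting sequence are strictly positive while the trajectory perturbation stays controlled. Aperiodicity provides a common bridging length, continuity of $Q$ provides a uniform positive lower bound for transition probabilities in a neighborhood, and uniform boundedness of the $F^{i}$ on $M$ controls the spatial effect of short insertions; the remainder is standard continuity of flows and positivity of exponential densities.
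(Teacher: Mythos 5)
Your reduction to a piecewise constant control with values in $S$, via Theorem \ref{Theorem_approx}(ii) followed by (i), is exactly the paper's first step; where you genuinely diverge is in the probabilistic core. The paper at that point simply cites \cite[Lemma 3.2]{BBMZ15}, which asserts that the tube around a trajectory $\eta_{x,\mathbf{i},\mathbf{u}}$ induced by an adapted sequence has positive probability, while you re-derive that content directly: you first make the sequence adapted (and start it at the given mode $i$) by inserting short bridging blocks, using irreducibility, aperiodicity and continuity of $Q(\cdot)$ to obtain a uniform bridging length $N_{0}$ and a positive lower bound for the relevant products of $Q$-entries near the visited points, and you then bound from below the probability of the explicit event $A_{\eta}$ by factoring the Poisson count, the exponential waiting-time densities, and the conditional mode-selection probabilities. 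This is a correct and more self-contained route: it makes explicit where aperiodicity of $Q$, the positivity of the exponential densities, and continuity enter, all of which the paper delegates to \cite{BBMZ15} (whose Section 3.1 performs essentially your adaptation step). Two details deserve tightening. First, the bound $\delta/(4LN_{0}(n+1))$ on the inserted sojourn times only controls the instantaneous displacement; after an insertion the two trajectories follow the same vector fields from slightly different points, so the subsequent gap must be estimated with a Gronwall factor $e^{\ell T}$, $\ell$ a Lipschitz constant of the $F^{i}$ on $M$ — harmless, since the inserted times can be taken as small as needed, but the stated constant is not literally sufficient. Second, in defining $A_{\eta}$ you should arrange $\sum_{k}\tilde u_{k}<T$ (adjusting the last sojourn time if necessary) so that the event $\{N_{T}=N,\ |U_{k}-\tilde u_{k}|<\eta\}$ indeed has positive probability, and the factorization of $\mathbb{P}_{x,i}[A_{\eta}]$ should be justified by conditioning on the Poisson data, which is independent of the mode-selection randomness given the visited positions.
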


\begin{proof}
Theorem \ref{Theorem_approx}(ii) shows that for every control $v\in L^{\infty
}(\mathbb{R}_{+},\mathrm{co}(S))$ there is a control $v_{1}\in L^{\infty
}(\mathbb{R}_{+},S)$ such that
\[
\sup_{t\in\lbrack0,T]}\Vert\varphi(t,x,v)-\varphi(t,x,v_{1})\Vert\leq
\frac{\delta}{3}.
\]
By Theorem \ref{Theorem_approx}(i) we can find a piecewise constant control
with values in $S$ such that the corresponding trajectory approximates
$\varphi(t,x,v_{1})$ uniformly on $[0,T]$. Thus there are $n\in\mathbb{N}$ and
$(\mathbf{{i},{u})}\in\mathbb{T}_{n}$%
\[
\sup_{t\in\lbrack0,T]}\Vert\varphi(t,x,v_{1})-\eta_{x,\mathbf{i},\mathbf{u}%
}(t)\Vert\leq\frac{\delta}{3}.
\]
By \cite[Lemma 3.2]{BBMZ15} there is $\varepsilon>0$ such that
\[
\mathbb{P}_{x,i}\left[  \sup_{t\in\lbrack0,T]}\Vert X_{t}-\eta_{x,\mathbf{{i}%
,{u}}}(t)\Vert\leq\frac{\delta}{3}\right]  \geq\varepsilon>0.
\]
Here $\varepsilon$ depends on $x,i,\mathbf{i},\mathbf{u}$ and, naturally, on
$T$ and $\delta$. In the situation above, $(\mathbf{i},\mathbf{u})$ is
determined by $x$ and $v$, hence we may write $\varepsilon=\varepsilon
(x,i,v,T,\delta)$ \ Taken together, this implies%
\begin{align*}
&  \mathbb{P}_{x,i}\left[  \sup_{t\in\lbrack0,T]}\Vert X_{t}-\varphi
(t,x,v)\Vert\leq\delta\right] \\
&  =\mathbb{P}_{x,i}\left[  \sup_{t\in\lbrack0,T]}\left\Vert X_{t}%
-\eta_{x,\mathbf{i},\mathbf{u}}(t)+\eta_{x,\mathbf{i},\mathbf{u}}%
(t)-\varphi(t,x,v_{1})+\varphi(t,x,v_{1})-\varphi(t,x,v)\right\Vert \leq
\delta\right] \\
&  \geq\mathbb{P}_{x,i}\left[  \sup_{t\in\lbrack0,T]}\Vert X_{t}%
-\eta_{x,\mathbf{i},\mathbf{u}}(t)\Vert\leq\frac{\delta}{3}\right]
\geq\varepsilon(x,i,v,T,\delta).
\end{align*}

\end{proof}

The next result establishes a relation between the law of the continuous-time
process and the associated control system (with convexified control range).

\begin{theorem}
Consider control system (\ref{CS}) with controls in $L^{\infty}(\mathbb{R}%
_{+},\mathrm{co}(S))$ and let $x\in M$. If $X_{0}=x\in M$ then the support of
the law of $\left(  X_{t}\right)  _{t\geq0}$ equals the set of trajectories
starting in $x$ given by%
\[
\{\varphi(\cdot,x,v)\in C(\mathbb{R}_{+},\mathbb{R}^{d})\left\vert v\in
L^{\infty}(\mathbb{R}_{+},\mathrm{co}(S))\right.  \}.
\]

\end{theorem}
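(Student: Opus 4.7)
The plan is to prove the two inclusions separately, working with the topology of uniform convergence on compact subsets of $\mathbb{R}_+$ on the path space $C(\mathbb{R}_+,\mathbb{R}^d)$. Call the trajectory set on the right $\mathcal{T}(x)$. The support of the law of $(X_t)$ is, by definition, the smallest closed set of full probability, equivalently the set of paths $\gamma$ such that every basic open neighborhood $\{h : \sup_{t\in[0,T]}\|h(t)-\gamma(t)\|<\delta\}$ has positive probability.

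For the inclusion $\mathcal{T}(x) \subset \mathrm{supp}$, fix a trajectory $\varphi(\cdot,x,v)$ with $v \in L^\infty(\mathbb{R}_+,\mathrm{co}(S))$ and a basic neighborhood determined by $T>0$ and $\delta>0$. The Tube Lemma \ref{tubelemma} directly yields $\varepsilon=\varepsilon(x,i,v,T,\delta)>0$ with $\mathbb{P}_{x,i}[\sup_{t\in[0,T]}\|X_t-\varphi(t,x,v)\|\leq \delta]\geq\varepsilon>0$, so the neighborhood has positive probability and $\varphi(\cdot,x,v)$ lies in the support.

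For the reverse inclusion $\mathrm{supp} \subset \mathcal{T}(x)$, I first observe that every sample path of $(X_t)$ is almost surely of the form $\eta_{x,\mathbf{i},\mathbf{u}}$ for some (random) sequence $(\mathbf{i},\mathbf{u})$, since the construction in (\ref{Z}) concatenates pieces of the flows $\Phi^i$. Each such $\eta_{x,\mathbf{i},\mathbf{u}}$ is a trajectory of (\ref{CS}) driven by a piecewise constant control with values in $S \subset \mathrm{co}(S)$, hence lies in $\mathcal{T}(x)$. Thus the law of $(X_t)$ is concentrated on $\mathcal{T}(x)$, and it suffices to verify that $\mathcal{T}(x)$ is closed in $C(\mathbb{R}_+,\mathbb{R}^d)$. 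By Theorem \ref{Theorem_approx}(iii), $\mathcal{T}(x)$ coincides with the solution set of the differential inclusion (\ref{DI}) with $V=\mathrm{co}(S)$; this inclusion has compact, convex, continuous (in $x$) velocity sets $\mathrm{co}\{F^0(x),\ldots,F^m(x)\}$, and the vector fields $F^i$ are bounded on $M$, so the standard closure theorem for such differential inclusions (Aubin--Frankowska \cite[Ch.~8]{AubF90a}) implies that the solution set from the fixed initial point $x$ is closed under uniform convergence on compact intervals. Taking a diagonal subsequence across expanding intervals $[0,n]$ extends this to closedness in $C(\mathbb{R}_+,\mathbb{R}^d)$.

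The main subtlety I expect is the closure step on the unbounded half-line $\mathbb{R}_+$: closure on each $[0,T]$ is classical, but one must stitch these together, which I handle by a diagonal extraction argument using that uniform convergence on compacts is metrizable. Everything else reduces cleanly to the Tube Lemma, the approximation statements in Theorem \ref{Theorem_approx}, and the elementary fact that almost every realization of $(X_t)$ is pathwise a control trajectory.
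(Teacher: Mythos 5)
Your proof is correct, but it takes a genuinely different route from the paper. The paper settles the statement in two lines: it quotes \cite[Theorem 3.4]{BBMZ15}, which identifies the support of the law of $(X_t)_{t\geq0}$ with the solution set of the differential inclusion (\ref{DI}) with velocity sets $\mathrm{co}\{F^0(x),\ldots,F^m(x)\}$, and then applies Theorem \ref{Theorem_approx}(iii) to translate that solution set into the trajectory set of the control system with range $\mathrm{co}(S)$. You instead re-derive the support statement itself: one inclusion via the Tube Lemma \ref{tubelemma} (the same mechanism the paper uses elsewhere, resting on \cite[Lemma 3.2]{BBMZ15}), and the converse by noting that, by the construction (\ref{Z}) and the a.s.\ finiteness of Poisson jumps on bounded intervals, almost every sample path is a trajectory for a piecewise constant $S$-valued control, together with closedness of the trajectory set $\mathcal{T}(x)$ under locally uniform convergence, obtained from Theorem \ref{Theorem_approx}(iii) and the classical closure theorem for differential inclusions with compact convex velocity sets depending continuously on the state. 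In effect you reconstruct the proof of the cited result in control-theoretic language; what this buys is a self-contained argument (modulo the Tube Lemma, which the paper proves) that also makes explicit where convexification is indispensable: with control range $S$ the trajectory set is not closed, which is exactly why $\mathrm{co}(S)$ appears in the statement. Two minor simplifications: the diagonal extraction is not needed, since a path solves the inclusion on $\mathbb{R}_+$ if and only if each restriction to $[0,N]$ does, so closedness on every $[0,N]$ already yields closedness in $C(\mathbb{R}_+,\mathbb{R}^d)$, and a single control $v\in L^{\infty}(\mathbb{R}_+,\mathrm{co}(S))$ is then produced by one application of the measurable selection argument behind Theorem \ref{Theorem_approx}(iii); and no measurability issue arises in the concentration step, because $\mathcal{T}(x)$ is closed, hence Borel, so the a.s.\ pathwise representation gives it full measure and the support, being the smallest closed set of full measure, is contained in it.
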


\begin{proof}
By \cite[Theorem 3.4]{BBMZ15}, the support of the law of $\left(
X_{t}\right)  _{t\geq0}$ equals the set of trajectories $\eta(\cdot)$ with
$\eta(0)=x$ of the differential inclusion%
\[
\dot{\eta}(t)\in\left\{  \sum_{i=0}^{m}v_{i}(t)F^{i}(x)\left\vert \sum
_{i=0}^{m}v_{i}(t)=1\text{ and }v_{i}\in\lbrack0,1]\}\right.  \right\}  .
\]
By Theorem \ref{Theorem_approx}(iii) the set of trajectories of this
differential inclusion coincides with the set of trajectories of the
associated control system.
\end{proof}

A first consequence of the tube lemma is the following.

\begin{proposition}
\label{lemma_entry} Let $C\subset M$ be an invariant control set with nonvoid
interior and $x\in\mathcal{A}(C)$. Then there are $T_{x}>0$ and $\varepsilon
_{x}>0$ with
\[
\mathbb{P}_{x,i}\left[  X_{T_{x}}\in\mathrm{int}C\right]  \geq\varepsilon
_{x}\text{ for all }i\in E.
\]

\end{proposition}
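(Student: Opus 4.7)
The plan is to combine the characterization of the domain of attraction with the tube lemma. Since $x\in\mathcal{A}(C)$ and $C$ has nonvoid interior, the observation made just after the definition of $\mathcal{A}(C)$ gives that $\mathcal{O}^{+}(x)\cap\mathrm{int}C\neq\emptyset$. So I would first pick a control $v\in L^{\infty}(\mathbb{R}_{+},S)$ and a time $T_{x}>0$ such that $y:=\varphi(T_{x},x,v)\in\mathrm{int}C$, and then choose $\delta>0$ small enough that the closed $\delta$-ball around $y$ is contained in $\mathrm{int}C$.

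Next I would invoke the tube lemma (Lemma \ref{tubelemma}) with this particular $x$, $v$, $T_{x}$ and $\delta$. For each $i\in E$ it yields a strictly positive constant $\varepsilon(x,i,v,T_{x},\delta)$ such that
\[
\mathbb{P}_{x,i}\!\left[\sup_{t\in[0,T_{x}]}\Vert X_{t}-\varphi(t,x,v)\Vert\leq\delta\right]\geq\varepsilon(x,i,v,T_{x},\delta).
\]
On the event inside the probability, $\Vert X_{T_{x}}-y\Vert\leq\delta$, hence $X_{T_{x}}\in\mathrm{int}C$. Therefore $\mathbb{P}_{x,i}[X_{T_{x}}\in\mathrm{int}C]\geq\varepsilon(x,i,v,T_{x},\delta)$ for every $i\in E$.

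Finally, to obtain a single lower bound valid for all $i$, I use that $E$ is finite and set
\[
\varepsilon_{x}:=\min_{i\in E}\varepsilon(x,i,v,T_{x},\delta)>0,
\]
which gives the required uniform bound.

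There is no real obstacle here: the only subtlety is the observation that $x\in\mathcal{A}(C)$ plus $\mathrm{int}C\neq\emptyset$ upgrade from ``closure of the reachable set meets $C$'' to ``the reachable set itself meets $\mathrm{int}C$'' (by continuous dependence on the initial value), which is exactly the statement already made in the text. Everything else is a one-shot application of the tube lemma followed by taking a minimum over the finite set $E$.
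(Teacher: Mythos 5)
Your proposal is correct and follows essentially the same route as the paper: reach a point $y=\varphi(T_x,x,v)\in\mathrm{int}\,C$, enclose a $\delta$-ball around $y$ in $\mathrm{int}\,C$, apply the tube lemma to bound $\mathbb{P}_{x,i}[X_{T_x}\in\mathrm{int}\,C]$ from below, and take the minimum over the finite set $E$. The only cosmetic difference is that the paper simply takes the steering control in $L^{\infty}(\mathbb{R}_{+},\mathrm{co}(S))$, whereas you insist on an $S$-valued control (which is harmless, since the tube lemma covers $\mathrm{co}(S)$-valued controls anyway, so the restriction is unnecessary).
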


\begin{proof}
By the definition of $C$ there are $T_{x}\geq0$ and a control $v_{x}\in
L^{\infty}(\mathbb{R}_{+},\mathrm{co}(S))$ with $y:=\varphi(T_{x},x,v_{x}%
)\in\mathrm{int}C$, hence $B_{\delta_{x}}(y)\subset\mathrm{int}C$ for some
$\delta_{x}>0$. Hence the tube lemma, Lemma \ref{tubelemma}, implies
\begin{align*}
\mathbb{P}_{x,i}[X_{T_{x}}\in\mathrm{int}C]  &  \geq\mathbb{P}_{x,i}[X_{T_{x}%
}\in B_{\delta}(y)]\geq\mathbb{P}_{x,i}\left[  \Vert X_{T_{x}}-\varphi
(T_{x},x,v_{x})\Vert\leq\delta\right] \\
&  \geq\mathbb{P}_{x,i}\left[  \sup_{t\in\lbrack0,T_{x}]}\Vert X_{t}%
-\varphi(t,x,v_{x})\Vert\leq\delta\right]  \geq\varepsilon(x,i,v_{x}%
,T_{x},\delta_{x}),
\end{align*}
Since $v_{x},T_{x}$ and $\delta_{x}$ are determined by $x$ and $E$ is a finite
set we find that%
\[
\varepsilon_{x}:=\min_{i\in E}\varepsilon(x,i,v_{x},T_{x},\delta_{x})>0.
\]

\end{proof}

\section{Invariant measures and their supports\label{Section4}}

The purpose of this section is to show that the support of every invariant
measure is contained in the union of the invariant control sets times $E$ and
that, for an ergodic invariant measure, the support coincides with an
invariant control set times $E$.

Recall that a point $x_{0}$ is in the support of a measure if every
neighborhood of $x_{0}$ has positive measure and that the support is closed.

For the following results on existence of invariant measures for the discrete
time process and the continuous time process compare Bena\"{\i}m, Le Borgne,
Malrieu and Zitt \cite{BBMZ15}. First one notes that the invariant measures of
the discrete-time process and the continuous-time process are homeomorphic to
each other, the homeomorphism preserves ergodicity and the supports coincide
\cite[Proposition 2.4 and Lemma 2.6]{BBMZ15}.

In our context, \cite[Lemma 3.16]{BBMZ15} is replaced by the following
technical lemma.

\begin{lemma}
\label{lemma41}Let $M$ be a compact positively invariant set for system
(\ref{CS}) with controls in $L^{\infty}(\mathbb{R}_{+},\mathrm{co}(S))$ and
suppose that there are only finitely many invariant control sets $C_{r}\subset
M,r=1,\dots,l$, and $C_{r}=\mathrm{cl}(\mathrm{int}C_{r})$ and $\mathrm{int}%
C_{r}\subset\mathcal{O}^{+}(x)$ for every $x\in C_{r}$ and all $r$.

Consider the process $(\tilde{Z}_{n})_{n}=(\tilde{X}_{n},\tilde{Y}_{n})_{n}$.
Pick $p_{r}\in\mathrm{int}C_{r}$ and let $\mathcal{U}_{r}$ be an open
neighborhood of $p_{r}$ with $\mathrm{cl}\mathcal{U}_{r}\subset\mathrm{int}%
C_{r}$ and $\mathcal{U}:=\bigcup_{r=1}^{l}\mathcal{U}_{r}$ and, finally,
choose $i,j\in E$.

Then there exist $m\in\mathbb{N}$ and $\varepsilon,\beta>0$, finite sequences
$(\mathbf{i}^{1},\mathbf{u}^{1}),\ldots,(\mathbf{i}^{N},\mathbf{u}^{N}%
)\in\mathbb{T}_{m}^{ij}$ and an open covering $\mathcal{O}^{1},\ldots
,\mathcal{O}^{N}$ of $M$ such that for all $x\in M$ and $\mathbf{t}%
\in\mathbb{R}_{+}^{m},$%
\[
x\in\mathcal{O}^{k}\text{ and }\left\Vert \mathbf{t}-\mathbf{u}^{k}\right\Vert
<\varepsilon\text{ implies }\Phi_{\mathbf{t}}^{\mathbf{i}^{k}}(x)\in
\mathcal{U}\text{ and }p(x,\mathbf{i}^{k},\mathbf{t})\geq\beta.
\]
Furthermore, $m,\varepsilon$ and $\beta$ can be chosen independently of
$i,j\in E$.
\end{lemma}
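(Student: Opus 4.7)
The plan is to work pointwise in $x \in M$ first, then upgrade to neighborhoods by continuity, then pass to a finite subcover by compactness, and finally harmonize all sequence lengths and constants across the finite set $E \times E$ of pairs $(i,j)$.

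Step 1 (pointwise construction for fixed $x$, $i$, $j$). By Corollary \ref{Corollary_inv}(i) applied to the compact positively invariant set $M$, there is an invariant control set $C_r$ with $\mathrm{int}\,C_{r} \cap \mathcal{O}_{pc}^{+}(x)\neq\emptyset$, so a piecewise constant control steers $x$ into $\mathrm{int}\,C_r$, say to a point $z$. Since $\mathrm{int}\,C_r \subset \mathcal{O}^+(z)$ by hypothesis, we can further steer from $z$ into $\mathcal{U}_r$; by Theorem \ref{Theorem_approx}(i) this can again be realized with piecewise constant controls, landing inside the open set $\mathcal{U}_r$. Concatenation produces some $(\tilde{\mathbf{i}},\tilde{\mathbf{u}})$ with $\Phi_{\tilde{\mathbf{u}}}^{\tilde{\mathbf{i}}}(x) \in \mathcal{U}_r$ and $p(x,\tilde{\mathbf{i}},\tilde{\mathbf{u}})>0$. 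To force $\tilde{i}_0 = i$ and the last entry $= j$, I would prepend and append auxiliary mode chains: aperiodicity and irreducibility of $Q(y)$ at the relevant points yield finite chains $i \to \cdots \to \tilde{i}_0$ and (terminal mode) $\to \cdots \to j$ with positive $Q$-products; executing each of these chains with sufficiently small flow times perturbs the corresponding trajectory arbitrarily little by continuous dependence on initial conditions, so the resulting endpoint still lies in $\mathcal{U}_r$ and the full product $p$ stays strictly positive.

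Step 2 (from the point $x$ to a neighborhood $\mathcal{O}(x)$). Given $(\mathbf{i}^x,\mathbf{u}^x) \in \mathbb{T}_{m(x)}^{ij}$ from Step 1, continuity of $(y,\mathbf{t}) \mapsto \Phi_{\mathbf{t}}^{\mathbf{i}^x}(y)$ (noted right after the definition of $\Phi_{\mathbf{u}}^{\mathbf{i}}$) and continuity of $(y,\mathbf{t}) \mapsto p(y,\mathbf{i}^x,\mathbf{t})$ (which follows from the continuity of $Q$) provide an open neighborhood $\mathcal{O}(x) \ni x$ and constants $\varepsilon(x),\beta(x)>0$ such that for all $y \in \mathcal{O}(x)$ and $\|\mathbf{t}-\mathbf{u}^x\|<\varepsilon(x)$ one has $\Phi_{\mathbf{t}}^{\mathbf{i}^x}(y) \in \mathcal{U}_r \subset \mathcal{U}$ and $p(y,\mathbf{i}^x,\mathbf{t}) \geq \beta(x)$.

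Step 3 (compactness and harmonization of $m$). Compactness of $M$ yields a finite subcover $\mathcal{O}^1,\ldots,\mathcal{O}^N$ with sequences $(\mathbf{i}^k,\mathbf{u}^k) \in \mathbb{T}_{m^k}^{ij}$ and constants $\varepsilon^k, \beta^k$. Take $m := \max_k m^k$ and pad each shorter $(\mathbf{i}^k,\mathbf{u}^k)$ to length $m$ by appending, at its end, a mode cycle $j \to \cdots \to j$ of the required extra length, chosen with positive $Q$-product using aperiodicity of $Q$ at a point in $\mathrm{int}\,C_r$ near the trajectory's endpoint; give each appended segment a very small flow time. Continuous dependence keeps the new endpoint inside $\mathcal{U}_r$, and continuity of $p$ keeps the new $p$-value bounded below. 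Setting $\varepsilon := \min_k \varepsilon^k$ and $\beta := \min_k \beta^k$ (possibly shrunk to absorb the padding estimates) gives uniform constants $(m, \varepsilon, \beta)$ valid for the fixed pair $(i, j)$. Finally, since $E \times E$ is finite, repeating this for every $(i,j)$ and again taking the maximum of the $m_{ij}$'s (with the same padding trick) and the minimum of the $\varepsilon_{ij}, \beta_{ij}$'s yields constants independent of $i,j \in E$.

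The main obstacle is the mode‑alignment in Step 1 and the padding in Step 3: enforcing prescribed first and last modes, and a uniform length $m$, without destroying either the geometric target condition $\Phi_{\mathbf{t}}^{\mathbf{i}}(y) \in \mathcal{U}$ or the positivity of $p$. Both issues are resolved by the same mechanism — using aperiodicity/irreducibility of $Q$ to manufacture mode chains with strictly positive transition probability, executed with flow times small enough that the flows behave essentially like the identity by continuous dependence — but the estimates must be done carefully enough that positivity is uniform in a neighborhood, which is what continuity of $Q$ and of the flows provides.
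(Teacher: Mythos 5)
Your overall architecture --- steer each point of $M$ into $\mathcal{U}$ along a finite switching chain, pass from the point to a neighborhood by continuity of $(x,\mathbf{t})\mapsto\Phi_{\mathbf{t}}^{\mathbf{i}}(x)$ and of $p$, extract a finite subcover by compactness, pad the chains to a common length, and remove the dependence on $(i,j)$ by finiteness of $E\times E$ --- is the same as the paper's, which packages it through the open sets $\mathcal{O}(\mathbf{i},\mathbf{u},\beta)=\{x\in M \mid \Phi_{\mathbf{u}}^{\mathbf{i}}(x)\in\mathcal{W},\ p(x,\mathbf{i},\mathbf{u})>\beta\}$. The genuine gap is in your Step 1: you assert that the chain $(\tilde{\mathbf{i}},\tilde{\mathbf{u}})$ read off from a concatenation of piecewise constant controls satisfies $p(x,\tilde{\mathbf{i}},\tilde{\mathbf{u}})>0$. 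This is not automatic: $Q(x)$ is only irreducible and aperiodic, so individual entries may vanish, and the mode sequence dictated by an arbitrary piecewise constant control can contain one-step switches $i_{k-1}\to i_{k}$ with $Q(x_{k},i_{k-1},i_{k})=0$, making $p=0$. You apply your repair mechanism (prepending/appending $Q$-positive mode paths executed with tiny flow times) only to the first and last modes, but it is needed for every transition in the chain: each forbidden switch must be replaced by a $Q$-positive path of modes with flow times so small that, by continuous dependence and continuity of $Q$, the endpoint stays in the open set $\mathcal{U}_{r}$ and all factors of $p$ remain positive. The paper sidesteps this entirely by invoking the tube lemma (Lemma \ref{tubelemma}, via \cite[Lemma 3.2]{BBMZ15}): since the PDMP remains in a tube around the steering trajectory with positive probability and the process only realizes adapted chains, an adapted chain in $\mathbb{T}_{n}^{ij}$ with endpoint in the target exists automatically, with $p>0$ built in.

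A smaller point: in Step 3 you pad a chain with a $j\to\cdots\to j$ cycle ``of the required extra length''. Aperiodicity does not yield positive cycles of every prescribed length; what the standing assumption gives (since $Q^{n_{x}}(x,\cdot,\cdot)>0$ implies $Q^{n}(x,j,j)>0$ for all $n\geq n_{x}$, uniformly on $M$ by continuity and compactness) is positivity only from some length $n_{0}$ on, so you should pad all chains to a common length at least $\max_{k}m^{k}+n_{0}$ rather than exactly to $\max_{k}m^{k}$. The paper's own ``false jumps'' step requires the same care, so this is easily absorbed; likewise, your taking $\varepsilon:=\min_{k}\varepsilon^{k}$ over the finite subcover replaces, and is equivalent to, the paper's device of aiming at an inner neighborhood $\mathcal{W}_{r}$ with $\mathrm{cl}\,\mathcal{W}_{r}\subset\mathcal{U}_{r}$.
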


\begin{remark}
If $M$ is a compact positively invariant and locally accessible set, then the
assumptions of Lemma \ref{lemma41} hold by Corollary \ref{Corollary_inv}.
\end{remark}

\begin{proof}
Fix $i$ and $j$ and consider open neighborhoods $\mathcal{W}_{r}$ of $p_{r}$
with $\mathrm{cl}\mathcal{W}_{r}\subset\mathcal{U}_{r},r=1,\ldots,l$. Let
$\mathcal{W}:=\bigcup_{r=1}^{l}\mathcal{W}_{r}$. For all $\beta>0$ and all
finite sequences $(\mathbf{i},\mathbf{u})$ the sets
\[
\mathcal{O}\left(  \mathbf{i},\mathbf{u},\beta\right)  :=\left\{  x\in
M\left\vert \Phi_{\mathbf{u}}^{\mathbf{i}}(x)\in\mathcal{W},p(x,\mathbf{i}%
,\mathbf{u})>\beta\right.  \right\}
\]
are open, since $\Phi_{\mathbf{u}}^{\mathbf{i}}$ and $p$ are continuous with
respect to $x$. Using Corollary \ref{Corollary_inv}(i) one finds for every
point $x\in M$ an invariant control set $C_{r},r\in\{1,\ldots,l\}$, with
$\mathrm{int}C_{r}\subset\mathcal{O}^{+}(x)$. Hence, for every $x\in M$ there
are a time $T>0$ and a control $v$ such that $\varphi(T,x,v)\in\mathcal{U}$.

Then the tube lemma, Lemma \ref{tubelemma}, implies
\[
M=\bigcup_{n\in\mathbb{N}}\left(  \bigcup_{\beta>0}\bigcup_{(\mathbf{i}%
,\mathbf{u})\in\mathbb{T}_{n}^{ij}}\mathcal{O}\left(  \mathbf{i}%
,\mathbf{u},\beta\right)  \right)  .
\]
By adding 'false' jumps to a chain $(\mathbf{i},\mathbf{u})$, we get:
\[
\forall(\mathbf{i},\mathbf{u})\in\mathbb{T}_{n}^{ij}~\forall n^{\prime}\geq
n~\forall\beta~\exists\beta^{\prime}>0~\exists(\mathbf{i^{\prime}%
},\mathbf{u^{\prime}})\in\mathbb{T}_{n^{\prime}}^{ij}:\mathcal{O}\left(
\mathbf{i},\mathbf{u},\beta\right)  \subset\mathcal{O}\left(
\mathbf{i^{\prime}},\mathbf{u^{\prime}},\beta^{\prime}\right)  .
\]
Therefore the union over $n$ is increasing, so by compactness of $M$, there is
$m_{ij}$ with
\[
M\subset\bigcup_{\beta>0}\left(  \bigcup_{(\mathbf{i},\mathbf{u})\in
\mathbb{T}_{m_{ij}}^{ij}}\mathcal{O}\left(  \mathbf{i},\mathbf{u}%
,\beta\right)  \right)  .
\]
Since there only finitely many $i$ and $j$, we can choose $m$ independently of
$i,j\in E$.

The inclusions
\[
\mathcal{O}\left(  \mathbf{i},\mathbf{u},\beta_{1}\right)  \subset
\mathcal{O}\left(  \mathbf{i},\mathbf{u},\beta_{2}\right)  \text{ for }%
\beta_{1}\geq\beta_{2}%
\]
show that the union over the $\beta$ is increasing with decreasing $\beta$.
Thus, by compactness there is $\beta_{0}>0$ such that for all $i,j\in E$
\[
M=\bigcup_{(\mathbf{i},\mathbf{u})\in\mathbb{T}_{m}^{ij}}\mathcal{O}\left(
\mathbf{i},\mathbf{u},\beta_{0}\right)  .
\]
Again compactness of $M$ shows that there is $N\in\mathbb{N}$ with
$M=\bigcup_{k=1}^{N}\mathcal{O}_{k}$, where $\mathcal{O}_{k}:=\mathcal{O}%
\left(  \mathbf{i}^{k},\mathbf{u}^{k},\beta_{0}\right)  $ for some
$(\mathbf{i}^{k},\mathbf{u}^{k})\in\mathbb{T}_{m}^{ij}$. Since $\mathrm{cl}%
\mathcal{W}_{r}\subset\mathrm{int}\mathcal{U}_{r}$ for $r=1,\ldots,l$, the
distance between $\mathrm{cl}\mathcal{W}$ and the complement of $\mathcal{U}$
is positive by compactness. So we can choose $\varepsilon$ small enough such
that for all $x\in\mathcal{O}_{k}$ and all $\mathbf{t}\in\mathbb{R}_{+}^{m}$
with $\left\Vert \mathbf{t-u}^{k}\right\Vert \leq\varepsilon$%
\[
\Phi_{\mathbf{t}}^{\mathbf{i}^{k}}(x)\in\mathcal{U}\text{ and }p(x,\mathbf{i}%
^{k},\mathbf{t})\geq\beta.
\]
Using compactness of $M$ one can here choose $\varepsilon,\beta>0$
independently of $x\in M$.
\end{proof}

We denote the $n$-step transition probability from $(x,i)$ to a measurable set
$A\subset M\times E$ by $P_{n}\left(  (x,i),A\right)  =\mathbb{E}\left[
\tilde{Z}_{n}\in A|\tilde{Z}_{0}=(x,i)\right]  $.

\begin{lemma}
\label{Lemma_inC(i)}Let $y\in\mathcal{O}^{+}(x)$ and consider a neighborhood
$\mathcal{W}(y)$ of $y$. Then for all $i,j\in E$ there are a neighborhood
$\mathcal{W}(x)$ of $x$ and $n\in\mathbb{N}$ such that for all $z\in
\mathcal{W}(x)$
\[
P_{n}\left(  (z,i),\mathcal{W}(y)\times\{j\}\right)  >0.
\]

\end{lemma}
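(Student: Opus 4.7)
The plan is to build an adapted piecewise-constant sequence $(\mathbf{i}^{\ast},\mathbf{u}^{\ast})\in\mathbb{T}_{n}^{ij}$ whose deterministic trajectory from $x$ terminates inside $\mathcal{W}(y)$ and whose weight $p(x,\mathbf{i}^{\ast},\mathbf{u}^{\ast})$ is strictly positive, and then to transfer this to $P_{n}((z,i),\mathcal{W}(y)\times\{j\})>0$ for $z$ in a small neighborhood of $x$ by combining continuity of $z\mapsto\eta_{z,\mathbf{i}^{\ast},\mathbf{u}^{\ast}}(t_{n}^{\ast})$ and $z\mapsto p(z,\mathbf{i}^{\ast},\mathbf{u}^{\ast})$ with the strict positivity of the Poisson inter-arrival density.

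First I would use $y\in\mathcal{O}^{+}(x)$ to fix a control $v$ and a time $T>0$ with $\varphi(T,x,v)=y$, then invoke Theorem~\ref{Theorem_approx}(i)--(ii) to replace $v$ by a piecewise-constant control with values in $S$, encoded by some $(\mathbf{i}^{(0)},\mathbf{u}^{(0)})\in\mathbb{T}_{n_{0}}$ whose trajectory $\eta_{x,\mathbf{i}^{(0)},\mathbf{u}^{(0)}}$ ends in a small ball $B_{\delta}(y)\subset\mathcal{W}(y)$. This preliminary chain need not be adapted and need not start with $i$ or end with $j$.

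The core of the argument is to repair this chain. Whenever a transition $i_{k-1}^{(0)}\to i_{k}^{(0)}$ has $Q(x_{k},i_{k-1}^{(0)},i_{k}^{(0)})=0$, irreducibility and aperiodicity of $Q(x_{k})$ give some $n_{k}$ with $Q^{n_{k}}(x_{k},i_{k-1}^{(0)},i_{k}^{(0)})>0$; unfolding this power produces a path in $E$ whose consecutive one-step $Q$-weights, evaluated at points close to $x_{k}$, stay strictly positive by continuity of $Q$. I would insert such a connecting path at every defective transition, and similarly prepend a path $i\to i_{0}^{(0)}$ and append a path $i_{n_{0}}^{(0)}\to j$, giving every inserted intermediate mode an arbitrarily small sojourn time. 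Continuous dependence of each flow $\Phi^{k}$ on its initial data ensures that these short insertions perturb the overall trajectory by an amount that tends to zero with the inserted times, so for sufficiently small sojourns the resulting chain $(\mathbf{i}^{\ast},\mathbf{u}^{\ast})\in\mathbb{T}_{n}^{ij}$ still satisfies $\eta_{x,\mathbf{i}^{\ast},\mathbf{u}^{\ast}}(t_{n}^{\ast})\in\mathcal{W}(y)$ while $p(x,\mathbf{i}^{\ast},\mathbf{u}^{\ast})>0$.

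Finally, continuity of $(z,\mathbf{u})\mapsto\eta_{z,\mathbf{i}^{\ast},\mathbf{u}}(t_{n})$ and $(z,\mathbf{u})\mapsto p(z,\mathbf{i}^{\ast},\mathbf{u})$ yields a neighborhood $\mathcal{W}(x)$ of $x$ and a small box $B$ around $\mathbf{u}^{\ast}$ on which $\eta_{z,\mathbf{i}^{\ast},\mathbf{u}}(t_{n})\in\mathcal{W}(y)$ and $p(z,\mathbf{i}^{\ast},\mathbf{u})\geq c>0$ uniformly for $z\in\mathcal{W}(x)$ and $\mathbf{u}\in B$. Restricting to the event $\{(U_{1},\dots,U_{n})\in B,\ (\tilde{Y}_{0},\dots,\tilde{Y}_{n})=\mathbf{i}^{\ast}\}$, the Poisson joint density is strictly positive on $B$, and conditionally on the inter-jump times the chain realizes $\mathbf{i}^{\ast}$ with probability $p(z,\mathbf{i}^{\ast},\mathbf{u})\geq c$, so integration gives $P_{n}((z,i),\mathcal{W}(y)\times\{j\})>0$. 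The main obstacle is the surgery step: simultaneously enforcing the correct endpoints in $E$, strict positivity of every transition weight, and closeness of the modified trajectory to the original one --- this is precisely where the irreducibility-aperiodicity hypothesis on $Q$ together with its continuous $x$-dependence are indispensable.
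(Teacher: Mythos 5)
Your proposal is correct, and its probabilistic skeleton is the same as the paper's: fix one adapted chain $(\mathbf{i}^{\ast},\mathbf{u}^{\ast})\in\mathbb{T}_{n}^{ij}$ whose deterministic trajectory from $x$ ends in $\mathcal{W}(y)$ with $p(x,\mathbf{i}^{\ast},\mathbf{u}^{\ast})>0$, use continuity of $(z,\mathbf{u})\mapsto\Phi_{\mathbf{u}}^{\mathbf{i}^{\ast}}(z)$ and of $p$ to get uniformity over a neighborhood $\mathcal{W}(x)$ and a box of jump times, and conclude via the strictly positive joint density of the i.i.d.\ exponential inter-jump times. Where you differ is in how the adapted chain is produced: the paper obtains it by invoking the tube lemma together with the covering arguments from the proof of Lemma \ref{lemma41}, which in turn rest on the cited estimate of Bena\"{\i}m, Le Borgne, Malrieu and Zitt (their Lemma 3.2), so the repair of non-adapted transitions and the fixing of the initial and terminal modes $i,j$ are hidden inside the cited machinery; you instead perform this repair explicitly, starting from the piecewise-constant approximation of Theorem \ref{Theorem_approx}(i)--(ii) and splicing in short-sojourn connecting paths whose existence and positivity come from irreducibility/aperiodicity of $Q(x_{k})$ and continuity of $Q$, with continuous dependence on initial data controlling the resulting perturbation of the trajectory (here ``insert'' should be read as replacing each defective one-step jump by the unfolded positive-weight path, which is clearly your intent). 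Your route is more self-contained and makes transparent exactly where the irreducibility/aperiodicity hypothesis enters, at the cost of redoing work the paper delegates to the tube lemma and \cite{BBMZ15}; the paper's route is shorter but leans on ``the arguments used in the proof of Lemma \ref{lemma41}'' rather than an explicit construction.
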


\begin{proof}
Let $y=\varphi(T,x,v)$ for some $T>0$ and $v\in L^{\infty}(\mathbb{R}%
_{+},\mathrm{co}(S))$. The tube lemma, Lemma \ref{tubelemma}, implies that for
all $\delta>0$ there is $\varepsilon>0$ such that
\[
\mathbb{P}_{x,i}\left[  \sup_{t\in\lbrack0,T]}\Vert X_{t}-\varphi
(t,x,v)\Vert\leq\delta\right]  \geq\varepsilon.
\]
By the arguments used in the proof of Lemma \ref{lemma41} one finds for a
neighborhood $\mathcal{W}(y)$ of $y$ a natural number $n\in\mathbb{N}$ and
$\varepsilon,\beta>0$ such that for every $x\in M$ there are an open
neighborhood $\mathcal{W}(x)$ of $x$ and $(\mathbf{i},\mathbf{u})\in
\mathbb{T}_{n}^{ij}$ with the following property:

For $\mathbf{t}\in\mathbb{R}_{+}^{n}$ with $\left\Vert \mathbf{t}%
-\mathbf{u}\right\Vert \leq\varepsilon$ and $z\in\mathcal{W}(x)$ it follows
that $\Phi_{\mathbf{t}}^{\mathbf{i}}(z)\in\mathcal{W}(y)$ and $p(z,\mathbf{i}%
,\mathbf{t})\geq\beta$.

Then we have for all $z\in\mathcal{W}(x)$ and all $i,j\in E$%
\begin{align*}
P_{n}((z,i),\mathcal{W}(y)\times\{j\})  &  \geq\mathbb{P}_{(y,i)}\left(
\left\Vert \mathbf{t}-\mathbf{u}\right\Vert \leq\varepsilon,\mathbf{i}%
^{\prime}=\mathbf{i}\right) \\
&  =\mathbb{P}_{(y,i)}\left(  \left\Vert \mathbf{t}-\mathbf{u}\right\Vert
\leq\varepsilon\right)  \cdot\mathbb{P}_{(y,i)}\left(  \mathbf{i}^{\prime
}=\mathbf{i}|\left\Vert \mathbf{u}^{\prime}-\mathbf{u}\right\Vert
\leq\varepsilon\right) \\
&  \geq\mathbb{P}_{(y,i)}\left(  \left\Vert \mathbf{t}-\mathbf{u}\right\Vert
\leq\varepsilon\right)  \cdot\beta.
\end{align*}
Since the components of $\mathbf{t}$ are identically and independently
distributed and the distribution is exponential, there is $\gamma>0$ such that
$\mathbb{P}_{(y,i)}\left(  \left\Vert \mathbf{t}-\mathbf{u}\right\Vert
\leq\varepsilon\right)  \geq\gamma$, and
\[
P_{n}((z,i),\mathcal{W}(y)\times\{j\})>0.
\]

\end{proof}

Similar arguments will show the following lemma.

\begin{lemma}
\label{Lemma4.4}Let the assumptions and notation of Lemma \ref{lemma41} be
satisfied. Then there exist $m\in\mathbb{N}$ such that for all $x\in M$ and
all $i\in E$%
\[
P_{m}\left(  (x,i),\bigcup\nolimits_{r}\mathrm{int}C_{r}\times E\right)  >0.
\]

\end{lemma}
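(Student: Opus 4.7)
The plan is to derive Lemma \ref{Lemma4.4} as essentially a direct corollary of Lemma \ref{lemma41} combined with a disintegration of the $m$-step transition probability into the jump-time density and the mode-transition density.

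First I would fix $x\in M$ and $i\in E$, and apply Lemma \ref{lemma41} with this $i$ and an arbitrary choice of $j\in E$ (any one will do, since the conclusion only asks the position to land in $\bigcup_{r}\mathrm{int}C_{r}$ with no constraint on the mode). This yields the uniform constants $m,\varepsilon,\beta>0$, the chains $(\mathbf{i}^{1},\mathbf{u}^{1}),\ldots,(\mathbf{i}^{N},\mathbf{u}^{N})\in\mathbb{T}_{m}^{ij}$, and the open cover $\mathcal{O}^{1},\ldots,\mathcal{O}^{N}$ of $M$. Select an index $k=k(x)$ with $x\in\mathcal{O}^{k}$; by construction $i_{0}^{k}=i$, so the chain $\mathbf{i}^{k}$ is compatible with our initial mode.

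Next, I would lower-bound $P_{m}((x,i),\bigcup_{r}\mathrm{int}C_{r}\times E)$ by restricting to the event $A$ that the first $m$ discrete modes coincide with $\mathbf{i}^{k}$ and the first $m$ inter-jump times $\mathbf{U}=(U_{1},\ldots,U_{m})$ satisfy $\|\mathbf{U}-\mathbf{u}^{k}\|<\varepsilon$. On $A$, Lemma \ref{lemma41} guarantees $\tilde{X}_{m}=\Phi_{\mathbf{U}}^{\mathbf{i}^{k}}(x)\in\mathcal{U}\subset\bigcup_{r}\mathrm{int}C_{r}$ and $p(x,\mathbf{i}^{k},\mathbf{U})\geq\beta$. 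Disintegrating on the iid exponential times, the recursive definition of $(\tilde Z_n)_n$ gives
\[
\mathbb{P}_{(x,i)}(A)=\int_{\|\mathbf{u}-\mathbf{u}^{k}\|<\varepsilon}\lambda^{m}e^{-\lambda(u_{1}+\cdots+u_{m})}\,p(x,\mathbf{i}^{k},\mathbf{u})\,d\mathbf{u}\ \geq\ \beta\int_{\|\mathbf{u}-\mathbf{u}^{k}\|<\varepsilon}\lambda^{m}e^{-\lambda(u_{1}+\cdots+u_{m})}\,d\mathbf{u},
\]
and the remaining integral is strictly positive as the integrand is positive on a set of positive Lebesgue measure (centered at $\mathbf{u}^{k}\in\mathbb{R}_{+}^{m}$, possibly after shrinking $\varepsilon$ so the ball lies in $\mathbb{R}_{+}^{m}$). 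Hence $P_{m}((x,i),\bigcup_{r}\mathrm{int}C_{r}\times E)>0$, with $m$ independent of $(x,i)$ by the uniformity statement at the end of Lemma \ref{lemma41}.

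I do not expect any real obstacle: Lemma \ref{lemma41} has already absorbed all the topological and control-theoretic content (choosing chains that simultaneously steer into $\mathcal{U}$ and carry positive mode weight). The only mild bookkeeping is to match the deterministic trajectory $\Phi_{\mathbf{u}}^{\mathbf{i}}$ with $\tilde{X}_{m}$ coming from the recursive definition of $\tilde Z_n$ and to factor the $m$-step law into the product of the Poisson inter-jump density and the continuous Markov-kernel weights $p(x,\mathbf{i},\mathbf{u})$. Once this disintegration is written down, the lower bound $\beta\cdot\gamma>0$ (with $\gamma$ the exponential mass of the $\varepsilon$-ball around $\mathbf{u}^{k}$) is immediate and uniform in $(x,i)$.
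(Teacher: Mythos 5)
Your proposal is correct and follows essentially the same route as the paper: both lower-bound $P_{m}\left((x,i),\bigcup_{r}\mathrm{int}C_{r}\times E\right)$ by the probability that the first $m$ inter-jump times fall in the $\varepsilon$-ball around the $\mathbf{u}^{k}$ provided by Lemma \ref{lemma41} and the modes follow $\mathbf{i}^{k}$, using $p(x,\mathbf{i}^{k},\mathbf{t})\geq\beta$ for the mode weights and the positive exponential mass of the time ball. The only cosmetic difference is that you invoke the covering $\mathcal{O}^{1},\ldots,\mathcal{O}^{N}$ directly from the statement of Lemma \ref{lemma41}, whereas the paper re-derives neighborhoods $\mathcal{W}_{ij}(x)$ and covers $M$ by compactness; this changes nothing of substance.
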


\begin{proof}
Choose the open set $\mathcal{U}\subset$ $\bigcup\nolimits_{r}\mathrm{int}%
C_{r}$ as in Lemma \ref{lemma41}. Then one finds $m\in\mathbb{N}$ and
$\varepsilon,\beta>0$ such that for every $x\in M$ and $i,j\in E$ there is an
open neighborhood $\mathcal{W}_{ij}(x)$ of $x$ and $(\mathbf{i}_{ij}%
,\mathbf{u}_{ij})\in\mathbb{T}_{n}^{ij}$ with the following property:

For $\mathbf{t}\in\mathbb{R}_{+}^{m}$ with $\left\Vert \mathbf{t}%
-\mathbf{u}_{ij}\right\Vert \leq\varepsilon$ and $y\in\mathcal{W}_{ij}(x)$ it
follows that $\Phi_{\mathbf{t}}^{\mathbf{i}_{ij}}(y)\in\bigcup_{r=1}%
^{l}\mathrm{int}C_{r}$ and $p(y,\mathbf{i}_{ij},\mathbf{t})\geq\beta$.

The set $\mathcal{W}(x):=\bigcap_{i,j\in E}\mathcal{W}_{ij}(x)$ is an open
neighborhood of $x$ and we have for all $y\in\mathcal{W}(x)$ and all $i,j\in
E$%
\begin{align*}
P_{m}((y,i),\mathcal{U}\times E)  &  \geq P_{m}((y,i),\mathcal{U}\times E)\\
&  \geq\mathbb{P}_{(y,i)}\left(  \left\Vert \mathbf{u}-\mathbf{u}%
_{ij}\right\Vert \leq\varepsilon,\mathbf{i}=\mathbf{i}_{ij}\right) \\
&  \geq\mathbb{P}_{(y,i)}\left(  \left\Vert \mathbf{u}-\mathbf{u}%
_{ij}\right\Vert \leq\varepsilon\right)  \cdot\beta>0.
\end{align*}
Since $M$ is compact, finitely many neighborhoods $\mathcal{W}(x)$ cover $M$.
\end{proof}

For an invariant measure $\mu$ of $\left(  \tilde{Z}_{n}\right)  $, a simple
calculation shows for every measurable set $A\subset M\times E$%
\begin{equation}
\mu(A)=\int_{M\times E}P_{n}\left(  (x,i),A\right)  \mu(d(x,i))\text{ for
}n\in\mathbb{N}. \label{P_n}%
\end{equation}
For brevity, we will replace $\mu(d(x,i))$ in the following by $d\mu\,$when no
confusion can occur. The following theorem shows that the supports of the
invariant measures of the discrete-time process are determined on the
invariant control sets.

\begin{theorem}
\label{theorem44}Let $M$ be a compact positively invariant set for system
(\ref{CS}) with controls in $L^{\infty}(\mathbb{R}_{+},\mathrm{co}(S))$, and
suppose that there are only finitely many invariant control sets $C_{r}\subset
M,r=1,\dots,l$, and $C_{r}=\mathrm{cl}(\mathrm{int}C_{r})$ and $\mathrm{int}%
C_{r}\subset\mathcal{O}^{+}(x)$ for every $x\in C_{r}$ and all $r$ (this holds
in particular, if $M$ is also locally accessible). Then for every invariant
measure $\mu$ of the discrete-time process $(\tilde{Z}_{n})_{n}=(\tilde{X}%
_{n},\tilde{Y}_{n})_{n}$%
\[
\mathrm{supp}\mu\subset\bigcup_{r=1}^{l}C_{r}\times E.
\]

\end{theorem}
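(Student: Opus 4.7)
The plan is to prove $\mu(N) = 0$ where $N := (M \times E) \setminus A$ and $A := \bigcup_{r=1}^{l} C_r \times E$. Since each $C_r = \mathrm{cl}(\mathrm{int}\, C_r)$ is closed and $E$ is finite, $A$ is closed, so $\mu(N) = 0$ will give $\mathrm{supp}\,\mu \subset A$, as required. The three ingredients are (a) absorption of $A$ under the chain, (b) a uniform-in-$(x,i)$ lower bound on the probability of reaching $\bigcup_r \mathrm{int}\, C_r \times E$ in a fixed number of steps, and (c) the invariance identity (\ref{P_n}).

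First I would establish that $A$ is absorbing for $(\tilde{Z}_n)$. By the definition of an invariant control set one has $\mathrm{cl}\, C_r = \mathrm{cl}\,\mathcal{O}^+(x)$ for every $x \in C_r$, and since $C_r$ is closed this yields $\mathcal{O}^+(x) \subset C_r$; specializing to the constant control selecting a single $F^i$ in (\ref{CS}) gives $\Phi^i(t,x) \in C_r$ for every $t \geq 0$, $i \in E$, $x \in C_r$. As the discrete-time dynamics reads $\tilde{X}_{n+1} = \Phi^{\tilde{Y}_n}(U_{n+1}, \tilde{X}_n)$, this shows $P_n((x,i), A) = 1$ whenever $(x,i) \in A$.

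Second, I would strengthen Lemma \ref{Lemma4.4} into a uniform estimate: there exist $m \in \mathbb{N}$ and $\alpha > 0$ such that $P_m((x,i), G \times E) \geq \alpha$ for all $(x,i) \in M \times E$, where $G := \bigcup_r \mathrm{int}\, C_r \subset \bigcup_r C_r$. The proof of that lemma already supplies an open cover of $M$ by neighborhoods $\mathcal{W}(x_k)$ on each of which the probability admits a positive lower bound (namely $\beta$ times the probability that the exponentially distributed jump times fall within $\varepsilon$ of the reference vector $\mathbf{u}^k_{ij}$), and the constants $m$, $\varepsilon$, $\beta$ produced by Lemma \ref{lemma41} are already uniform in $i,j \in E$. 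Compactness of $M$ lets one pass to a finite subcover and take the minimum of the corresponding bounds, yielding $\alpha$. Combining this with step one gives $P_m((x,i), N) \leq 1 - \alpha$ for every $(x,i) \in M \times E$.

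Plugging into the invariance identity (\ref{P_n}) with $n = m$,
\[
\mu(N) = \int_A P_m((x,i), N)\, d\mu + \int_N P_m((x,i), N)\, d\mu,
\]
the first integrand vanishes by absorption and the second is bounded by $1 - \alpha$, so $\mu(N) \leq (1-\alpha)\mu(N)$, forcing $\mu(N) = 0$. The step I expect to be the main obstacle is the second one: Lemma \ref{Lemma4.4} as stated only asserts pointwise positivity, so the uniformity of the bound in $(x,i)$ must be extracted from its proof using the uniformity of the covering constants in Lemma \ref{lemma41} together with compactness of $M$. Steps one and three are then essentially routine.
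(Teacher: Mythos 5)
Your proposal is correct and follows essentially the same route as the paper: absorption of $\bigcup_{r} C_r \times E$ via positive invariance of the closed sets $C_r$, the $m$-step positivity of $P_m\bigl((x,i),\bigcup_r \mathrm{int}\,C_r\times E\bigr)$ from Lemma \ref{Lemma4.4}, and the invariance identity (\ref{P_n}) combine to force the measure of the complement to vanish. The only cosmetic difference is that the paper never needs your uniform lower bound $\alpha$ (it integrates the pointwise positivity over a set of positive $\mu$-measure and reaches a strict-inequality contradiction via Chapman--Kolmogorov with $P_{m+1}$), whereas you legitimately extract uniformity from the compactness argument already contained in the proof of Lemma \ref{Lemma4.4}, obtaining the direct estimate $\mu(N)\leq(1-\alpha)\mu(N)$; both variants are valid.
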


\begin{proof}
Suppose, by way of contradiction, that there is $(x_{0},i_{0})\in
A:=\mathrm{supp}\mu\setminus\left(  \bigcup_{r}C_{r}\times E\right)  $. Since
the invariant control sets $C_{r}$ are closed, there is an open neighborhood
$\mathcal{W}(x_{0})$ of $x_{0}$ with $\left(  \mathcal{W}(x_{0})\times
E\right)  \cap\left(  \bigcup_{r}C_{r}\times E\right)  =\emptyset$. Then
\[
\mu\left(  \left(  M\setminus\bigcup\nolimits_{r}C_{r}\right)  \times
E\right)  \geq\mu(\mathcal{W}(x_{0})\times E)>0
\]
and%
\begin{align*}
1  &  =\mu(M\times E)=\mu\left(  \left(  M\setminus\bigcup\nolimits_{r}%
C_{r}\right)  \times E\right)  +\mu\left(  \bigcup\nolimits_{r}C_{r}\times
E\right)  =\mu\left(  \mathrm{supp}\mu\right) \\
&  =\mu\left(  \mathrm{supp}\mu\setminus\left(  \bigcup\nolimits_{r}%
C_{r}\times E\right)  \right)  +\mu\left(  \bigcup\nolimits_{r}C_{r}\times
E\right)  .
\end{align*}
It follows that%
\[
0<\mu\left(  \left(  M\setminus\bigcup\nolimits_{r}C_{r}\right)  \times
E\right)  =\mu\left(  \mathrm{supp}\mu\setminus\left(  \bigcup\nolimits_{r}%
C_{r}\times E\right)  \right)  =\mu(A).
\]
Lemma \ref{Lemma4.4} shows that%
\[
P_{m}\left(  (x,i),\bigcup\nolimits_{r}\mathrm{int}C_{r}\times E\right)
>0\text{ for all }(x,i)\in A,
\]
which implies%
\begin{equation}
\int_{A}P_{m}\left(  (x,i),\bigcup\nolimits_{r}\mathrm{int}C_{r}\times
E\right)  \mu(d(x,i))>0. \label{a}%
\end{equation}
By invariance of $\mu$ and positive invariance of the invariant control sets
$C_{r}$ we find%
\begin{align}
\mu(A)  &  =\int\limits_{\mathrm{supp}\mu}P_{m+1}\left(  (x,i),A\right)
d\mu\nonumber\\
&  =\underbrace{\int\limits_{\mathrm{supp}\mu\cap\left(  \bigcup C_{r}\times
E\right)  }P_{m+1}\left(  (x,i),A\right)  d\mu}_{=0}+\int
\limits_{\mathrm{supp}\mu\setminus\left(  \bigcup C_{r}\times E\right)
}P_{m+1}\left(  (x,i),A\right)  d\mu\nonumber\\
&  =\int\limits_{A}P_{m+1}\left(  (x,i),A\right)  d\mu. \label{b}%
\end{align}
Using the Chapman-Kolmogorov equation and again positive invariance of the
invariant control sets $C_{r}$, we can estimate for all $(x,i)\in A$
\begin{align*}
P_{m+1}\left(  (x,i),A\right)   &  =\int\limits_{M\times E}P_{m}\left(
(x,i),(y,j)\right)  P_{1}\left(  (y,j),A\right)  \mu(d(y,j))\\
&  =\int\limits_{\left(  M\times E\right)  \setminus\left(  \bigcup
C_{r}\times E\right)  }P_{m}\left(  (x,i),(y,j)\right)  \underbrace
{P_{1}\left(  (y,j),A\right)  }_{\leq1}\mu(d(y,j))\\
&  \qquad+\underbrace{\int\limits_{\bigcup C_{r}\times E}P_{m}\left(
(x,i),(y,j)\right)  P_{1}\left(  (y,j),A\right)  \mu(d(y,j))}_{=0}\\
&  \leq\int\limits_{\left(  M\times E\right)  \setminus\left(  \bigcup
C_{r}\times E\right)  }P_{m}\left(  (x,i),(y,j)\right)  \mu(d(y,j))\\
&  =P_{m}\left(  (x,i),(M\times E)\setminus\left(  \bigcup C_{r}\times
E\right)  \right) \\
&  =P_{m}\left(  (x,i),(M\times E)\right)  -P_{m}\left(  (x,i),\bigcup
\nolimits_{r}\mathrm{int}C_{r}\times E\right) \\
&  =1-P_{m}\left(  (x,i),\bigcup\nolimits_{r}\mathrm{int}C_{r}\times E\right)
\end{align*}
Together with (\ref{a}) and (\ref{b}) this yields the contradiction%
\begin{align*}
\mu(A)  &  =\int\limits_{A}P_{m+1}\left(  (x,i),A\right)  d\mu\\
&  \leq\mu(A)-\int_{A}P_{m}\left(  (x,i),\bigcup\nolimits_{r}\mathrm{int}%
C_{r}\times E\right)  d\mu<\mu(A).
\end{align*}

\end{proof}

We note the following property.

\begin{proposition}
\label{proposition45}Let $\mu$ be an invariant measure for the discrete-time
process $(\tilde{Z}_{n})_{n}$. Let $C$ be a compact invariant control set and
assume that local accessibility holds on $C$. If $\mathrm{supp}\mu\cap(C\times
E)\not =\emptyset$, then $C\times E\subset\mathrm{supp}\mu$ and, in
particular, $\mu(C\times E)>0$.
\end{proposition}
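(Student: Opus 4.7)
The plan is to fix $(x_0,i_0)\in \mathrm{supp}\,\mu\cap (C\times E)$, an arbitrary target $(y,j)\in C\times E$, and an open neighborhood $\mathcal{W}(y)\times\{j\}$ of $(y,j)$ in $M\times E$ (these exhaust the neighborhoods of $(y,j)$ since $E$ is discrete), and to show $\mu(\mathcal{W}(y)\times\{j\})>0$. The mechanism is the standard Markov push-forward: produce positive $n$-step probability from a neighborhood of $x_0$ into a neighborhood of a point $y'$ close to $y$, and transfer this via the invariance identity (\ref{P_n}).

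First I would locate a genuinely reachable point near $y$. Because $C$ is a compact (hence closed) invariant control set, the definition yields $C=\mathrm{cl}\,\mathcal{O}^+(x_0)$, so there exists $y'\in \mathcal{W}(y)\cap \mathcal{O}^+(x_0)$; fix an open neighborhood $\mathcal{W}'(y')\subset \mathcal{W}(y)$ of $y'$. Applying Lemma~\ref{Lemma_inC(i)} to $x_0$, $y'$, and the pair $(i_0,j)$ furnishes $n\in\mathbb{N}$ and an open neighborhood $\mathcal{W}(x_0)$ of $x_0$ with
\[
P_n\bigl((z,i_0),\,\mathcal{W}'(y')\times\{j\}\bigr) > 0 \quad \text{for all } z\in\mathcal{W}(x_0).
\]
Local accessibility on $C$ is used only implicitly here, through the tube-lemma machinery underlying Lemma~\ref{Lemma_inC(i)}.

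Then I would invoke (\ref{P_n}) and bound the mass from below by the contribution of $\mathcal{W}(x_0)\times\{i_0\}$:
\[
\mu\bigl(\mathcal{W}'(y')\times\{j\}\bigr) = \int_{M\times E} P_n\bigl((z,i),\mathcal{W}'(y')\times\{j\}\bigr)\,\mu(d(z,i)) \geq \int_{\mathcal{W}(x_0)\times\{i_0\}} P_n\bigl((z,i_0),\mathcal{W}'(y')\times\{j\}\bigr)\,d\mu.
\]
Since $(x_0,i_0)\in \mathrm{supp}\,\mu$, the set $\mathcal{W}(x_0)\times\{i_0\}$ has strictly positive $\mu$-measure; the integrand is strictly positive on this set; so the integral is strictly positive (write $\mathcal{W}(x_0)\times\{i_0\}$ as the countable increasing union of the sublevel sets $\{z:P_n((z,i_0),\mathcal{W}'(y')\times\{j\})\geq 1/k\}$ and use continuity of measure from below). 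Consequently $\mu(\mathcal{W}(y)\times\{j\})\geq \mu(\mathcal{W}'(y')\times\{j\})>0$, and hence $(y,j)\in \mathrm{supp}\,\mu$. For the \textquotedblleft in particular\textquotedblright\ part, I would apply what has just been proved to any $y\in \mathrm{int}\,C$ (which is nonempty by the argument of Theorem~\ref{basic1}(i) with $M=C$: local accessibility on $C$ gives $\mathrm{int}\,\mathcal{O}^+(x)\neq \emptyset$, and this is contained in $\mathrm{int}\,C$ by positive invariance of $C$) and to a neighborhood $\mathcal{W}(y)\subset \mathrm{int}\,C$, whence $\mu(C\times E)\geq \mu(\mathcal{W}(y)\times\{j\})>0$.

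The only real obstacle, and it is a minor one, is the shrinking step: one cannot feed Lemma~\ref{Lemma_inC(i)} the point $y$ directly, since $y$ only lies in $\mathrm{cl}\,\mathcal{O}^+(x_0)$, so the argument must replace $y$ by a nearby genuinely reachable $y'$ and a smaller open neighborhood $\mathcal{W}'(y')\subset \mathcal{W}(y)$. Everything else is the standard translation of deterministic controllability into positivity of the Markov kernel, followed by the invariance identity.
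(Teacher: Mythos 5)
Your argument is correct and follows essentially the same route as the paper: the key ingredients are Lemma~\ref{Lemma_inC(i)} applied to a point of $\mathcal{O}^{+}(x_{0})$ near the target, combined with the invariance identity (\ref{P_n}) and the positivity of $\mu$ on a neighborhood of $(x_{0},i_{0})\in\mathrm{supp}\,\mu$. The only differences are cosmetic: the paper argues by contradiction and reduces to $y\in\mathrm{int}\,C$ using $C=\mathrm{cl}(\mathrm{int}\,C)$ and $\mathrm{int}\,C\subset\mathcal{O}^{+}(x_{0})$ from Theorem~\ref{basic1}(i), whereas you argue directly from the approximate reachability $C=\mathrm{cl}\,\mathcal{O}^{+}(x_{0})$ and a shrunken neighborhood of a genuinely reachable point $y'$, invoking local accessibility only to get $\mathrm{int}\,C\neq\emptyset$ for the final positivity claim.
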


\begin{proof}
Suppose, contrary to the assertion, that there is $(y,j)\in(C\times
E)\setminus\mathrm{supp}\mu$. By Theorem \ref{basic1}(i) $C=\mathrm{cl}%
(\mathrm{int}C)$ and hence we may assume that $y\in\mathrm{int}C$. Thus there
is an open neighborhood $\mathcal{W}(y)\subset C$ of $y$ with
\begin{equation}
\left(  \mathcal{W}(y)\times\{j\}\right)  \cap\mathrm{supp}\mu=\emptyset.
\label{a0}%
\end{equation}
Pick $(x_{0},i_{0})\in\mathrm{supp}\mu\cap(C\times E)$. By Theorem
\ref{basic1}(i) $\mathrm{int}C\subset\mathcal{O}^{+}(x_{0})$, hence there
exists $y_{0}\in\mathcal{O}^{+}(x_{0})$ with $(y_{0},j)\in\mathcal{W}%
(y)\times\{j\}$. By Lemma \ref{Lemma_inC(i)}, we find an open neighborhood
$\mathcal{W}(x_{0})$ of $x_{0}$ such that for all $z\in\mathcal{W}(x_{0})$ and
$i\in E$%
\begin{equation}
P_{n}\left(  (z,i),\mathcal{W}(y)\times\{j\}\right)  >0\text{ and, clearly,
}\mu(\mathcal{W}(x_{0})\times E)>0. \label{a1}%
\end{equation}
This implies%
\[
\int_{M\times E}P_{n}\left(  (z,i),\mathcal{W}(y)\times\{j\}\right)  d\mu
\geq\int_{\mathcal{W}(x_{0})\times E}P_{n}\left(  (z,i),\mathcal{W}%
(y)\times\{j\}\right)  d\mu>0,
\]
and hence%
\begin{align*}
1  &  =\int_{M\times E}P_{n}\left(  (z,i),M\times E\right)  d\mu\\
&  =\int_{M\times E}P_{n}\left(  (z,i),\left(  M\times E\right)
\setminus\left(  \mathcal{W}(y)\times\{j\}\right)  \right)  d\mu+\int_{M\times
E}P_{n}\left(  (z,i),\mathcal{W}(y)\times\{j\}\right)  d\mu\\
&  >\int_{M\times E}P_{n}\left(  (z,i),\left(  M\times E\right)
\setminus\left(  \mathcal{W}(y)\times\{j\}\right)  \right)  d\mu.
\end{align*}
Using also (\ref{P_n}) and (\ref{a0}), we obtain the contradiction%
\begin{align*}
1  &  =\mu(\mathrm{supp}\mu)=\int_{M\times E}P_{n}\left(  (z,i),\mathrm{supp}%
\mu\right)  d\mu\\
&  \leq\int_{M\times E}P_{n}\left(  (z,i),\left(  M\times E\right)
\setminus\left(  \mathcal{W}(y)\times\{j\}\right)  \right)  d\mu<1.
\end{align*}

\end{proof}

Next we discuss the ergodic case. Recall that an invariant measure $\mu$ is
ergodic (extremal) if it cannot be written as a proper convex combination of
invariant measures.

\begin{theorem}
\label{Theorem47}(i) Assume that system (\ref{CS}) with controls in
$L^{\infty}(\mathbb{R}_{+},\mathrm{co}(S))$ is locally accessible on a compact
positively invariant set $M$. Then for every ergodic measure $\mu$ of the
discrete-time process $(\tilde{Z}_{n})_{n}$ there is a compact invariant
control set $C$ with $\mathrm{supp}\mu=C\times E$.

(ii) Conversely, let $C$ be a compact invariant control set. Then there exists
an ergodic measure with support equal to $C\times E$ and every invariant
measure with support contained in $C\times E$ has support equal to $C\times E$.

(iii) Assume that for some $x$ in a compact invariant control set $C$ the Lie
algebra $\mathcal{LA}(F^{0},\ldots,F^{m})$ has full rank at $x.$ Then there is
a unique invariant measure $\mu$ supported by $C\times E$ (hence $\mu$ is
ergodic) and there are nonnegative constants $c$ and $\rho$ with $\rho<1$ such
that for all $(x,i)\in C\times E$ and Borel sets $A\subset C$%
\begin{equation}
|\mathbb{P}_{x,i}[\tilde{Z}_{n}\in A]-\mu(A)|\leq c\rho^{n},n\in\mathbb{N}.
\label{a4}%
\end{equation}

\end{theorem}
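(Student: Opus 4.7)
The plan is to treat the three parts separately, leveraging Theorem \ref{theorem44} and Proposition \ref{proposition45} for the support statements and a Doeblin-type minorisation argument for the geometric ergodicity in (iii).

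For (i), Corollary \ref{Corollary_inv} applied to the compact, positively invariant, locally accessible set $M$ yields finitely many compact invariant control sets $C_{1},\dots,C_{l}\subset M$ satisfying $C_{r}=\mathrm{cl}(\mathrm{int}\,C_{r})$ and $\mathrm{int}\,C_{r}\subset\mathcal{O}^{+}(x)$ for every $x\in C_{r}$. The hypotheses of Theorem \ref{theorem44} are therefore met, so $\mathrm{supp}\,\mu\subset\bigcup_{r=1}^{l}C_{r}\times E$. Each factor $C_{r}\times E$ is positively invariant under $(\tilde{Z}_{n})_{n}$, because each flow $\Phi^{i}$ preserves $C_{r}$ (constant controls $v\equiv e_{i}$ witness the $\Phi^{i}$-invariance of $C_{r}$, which is part of Theorem \ref{basic1}(i)). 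If $\mathrm{supp}\,\mu$ met two distinct sets $C_{r}$ and $C_{s}$, Proposition \ref{proposition45} would give $\mu(C_{r}\times E),\mu(C_{s}\times E)>0$, and the normalised restrictions of $\mu$ to these disjoint positively invariant sets would display $\mu$ as a nontrivial convex combination of invariant probability measures, contradicting ergodicity. Hence $\mathrm{supp}\,\mu\subset C\times E$ for a single $C$, and Proposition \ref{proposition45} upgrades this inclusion to equality.

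For (ii), the set $C\times E$ is compact and positively invariant under the Feller chain $(\tilde{Z}_{n})_{n}$, so Krylov--Bogolyubov produces at least one invariant probability measure supported in $C\times E$. The collection of such measures is nonempty, convex and weak-$\ast$ compact, and any of its extreme points is an ergodic invariant measure $\mu$. Proposition \ref{proposition45} then forces $\mathrm{supp}\,\mu=C\times E$, and the same proposition gives the final sentence: any invariant measure with support in $C\times E$ has support equal to $C\times E$.

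For (iii), the rank condition $\{g(x):g\in\mathcal{LA}\}=\mathbb{R}^{d}$ at the single point $x$ will first be propagated to an open neighbourhood $W$ of $x$ by choosing $d$ elements of $\mathcal{LA}$ linearly independent at $x$ and invoking continuity; LARC then holds on $W\cap\mathrm{int}\,C\neq\emptyset$. Theorem \ref{Theorem_LARC} and Theorem \ref{basic1}(i) together with Lemma \ref{Lemma_inC(i)} ensure that from every $(z,j)\in C\times E$ the chain reaches any prescribed open subset of $W\cap\mathrm{int}\,C$ with positive probability in a bounded number of steps. The key step is then to construct a uniform Doeblin minorisation for some iterate $P_{N}$: fix $z_{0}\in W\cap\mathrm{int}\,C$, and for each pair $j,k\in E$ use the Lie bracket computation to choose an itinerary $(\mathbf{i},\mathbf{u})\in\mathbb{T}_{n}^{jk}$ such that $\mathbf{t}\mapsto\Phi^{\mathbf{i}}_{\mathbf{t}}(z_{0})$ is a submersion at $\mathbf{u}$ onto a ball $B\subset\mathrm{int}\,C$. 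Combining the exponential density of the holding times $U_{k}$ with continuity and positivity of $p(\cdot,\mathbf{i},\cdot)$ and the tube estimate of Lemma \ref{tubelemma}, one obtains a uniform bound $P_{N}((z,j),\cdot)\geq\varepsilon\,\mathbf{1}_{B\times\{k\}}\,\lambda$ valid for all $(z,j)\in C\times E$. Classical Doeblin theory on the compact state space $C\times E$ then yields the unique invariant measure $\mu$ together with the geometric bound \eqref{a4}, and by (ii) one has $\mathrm{supp}\,\mu=C\times E$.

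The decisive step, and the main obstacle, is the submersion-based density estimate in (iii): turning the qualitative Lie algebra rank condition into a quantitative absolutely continuous lower bound for some iterate of the kernel requires simultaneously choosing the itinerary so that the composed flow is a submersion, pushing forward the exponential density of the $U_{k}$ through a change of variables, and controlling the strictly positive but state-dependent weights $p(x,\mathbf{i},\mathbf{u})$. Once the Doeblin small set has been produced, uniqueness of the invariant measure on $C\times E$ and the geometric contraction are standard consequences of Markov-chain theory on a compact space.
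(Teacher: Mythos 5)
Parts (i) and (ii) of your argument are essentially the paper's own proof: Theorem \ref{theorem44} plus Proposition \ref{proposition45}, with ergodicity ruling out mass on two distinct positively invariant sets $C_r\times E$, and Krylov--Bogolyubov (Feller continuity, compactness, positive invariance of $C\times E$) plus extremality for existence in (ii). These parts are fine.

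Part (iii) is where you diverge, and where there is a genuine gap. The paper does not re-derive geometric ergodicity; it reduces to the cited result \cite[Theorem 4.5]{BBMZ15} by taking $M=C$ and, crucially, verifying its hypothesis via Proposition \ref{proposition_gamma}(ii), which identifies $C$ with the accessible set $\Gamma=\bigcap_{x\in C}\mathrm{cl}\,\mathcal{O}^{+}_{pc}(x)$ so that the rank point $x$ lies in $\Gamma$. You instead propose to prove a uniform Doeblin minorisation $P_{N}((z,j),\cdot)\geq\varepsilon\,\mathbf{1}_{B\times\{k\}}\lambda$ directly from the Lie algebra rank condition. The reachability and uniformity-over-$C\times E$ ingredients you list (Lemma \ref{tubelemma}, Lemma \ref{Lemma_inC(i)}, compactness, positivity of $p$) are available, but the decisive step --- that the rank of $\mathcal{LA}(F^{0},\ldots,F^{m})$ at a single point yields an itinerary $(\mathbf{i},\mathbf{u})$ for which the time-endpoint map $\mathbf{t}\mapsto\Phi^{\mathbf{i}}_{\mathbf{t}}(z_{0})$ is a submersion, and hence an absolutely continuous lower bound after pushing forward the exponential holding-time density --- is only announced, not proved, and you yourself flag it as the main obstacle. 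This is not a routine continuity argument: it is essentially the content of the Hörmander-type analysis in \cite[Section 4]{BBMZ15} (a Krener-type construction in the switching times, carried out while keeping $p(\cdot,\mathbf{i},\cdot)$ bounded below along the itinerary), i.e.\ exactly the theorem the paper imports. As written, your proof of (iii) therefore either silently assumes that external result or leaves its hardest step unproved; to make the proposal complete you must either carry out the submersion construction in detail or, as the paper does, cite \cite[Theorem 4.5]{BBMZ15} --- in which case you still need the $C=\Gamma$ identification from Proposition \ref{proposition_gamma}(ii), which your sketch omits.
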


\begin{proof}
(i) Theorem \ref{theorem44} shows that $\mathrm{supp}\mu\subset\bigcup
\nolimits_{r}C_{r}\times E$ and it remains to prove the converse inclusion. In
view of Proposition \ref{proposition45}, we have to show that the support of
$\mu$ can intersect only one set of the form $C_{r}\times E$ for an invariant
control set $C_{r}$. Let $(x,i)\in\mathrm{supp}\mu$ for some $x$ in the
interior of $C_{r}$ and some $i\in E$. Then $\mu(C_{r}\times E)>0$ and for
$A\subset C_{r}\times E$%
\[
\mu(A)=\int_{C_{r}\times E}P((x,i),A)d\mu.
\]
Here it suffices to integrate over $C_{r}\times E$, since for the other points
$(x,i)$ in the support of $\mu$ one has that $x\in C_{s},s\not =r$,\ implying
that the probability to reach $C_{r}$ vanishes. Hence the conditional
probability measure induced by $\mu$ on $C_{r}\times E$ is invariant. If
\[
\mu\left(  \bigcup\nolimits_{s\not =r}C_{s}\times E\right)  >0,
\]
then $\mu$ is not ergodic, since it can be written as a proper convex
combination of the conditional probability measures induced by $\mu$ on
$C_{r}\times E$ and on $\bigcup\nolimits_{s\not =r}C_{s}\times E$, respectively

(ii) Existence follows from Feller continuity, compactness and positive
invariance of $C\times E.$ The second statement follows from Proposition
\ref{proposition45}.

(iii) Uniqueness and the exponential estimate follow from Bena\"{\i}m, Le
Borgne, Malrieu and Zitt \cite[Theorem 4.5]{BBMZ15}. In fact, this theorem
considers for a compact positively invariant set $M$ the set $\Gamma$ defined
in (\ref{gamma}) and assumes that there is $x\in\Gamma$ such that the Lie
algebra $\mathcal{LA}(F^{0},\ldots,F^{m})$ has full rank at $x$. Then it
concludes that (\ref{a4}) holds for all Borel sets $A\subset\Gamma$. Here we
choose $M=C$. Then Proposition \ref{proposition_gamma} implies $C=\Gamma$ and
the assertion follows.
\end{proof}

Finally, we note the following consequence for the continuous-time process.

\begin{theorem}
(i) The assertions of Theorem \ref{Theorem47}(i) and (ii) also hold for the
continuous-time process $(Z_{t})_{t\geq0}$ defined in (\ref{Z}). In
particular, if for some $x$ in a compact invariant control set $C$ the Lie
algebra $\mathcal{LA}(F^{0},\ldots,F^{m})$ has full rank at $x$, there is a
unique invariant measure $\mu$ supported by $C\times E$ (hence $\mu$ is ergodic).

(ii) If instead of the rank condition for the Lie algebra $\mathcal{LA}%
(F^{0},\ldots,F^{m})$ the rank of the smallest Lie algebra containing all
\textquotedblleft control vector fields\textquotedblright\ $F^{i}%
-F^{0},i\not =0$, in (\ref{CS2}) and all Lie brackets with $F^{i}%
,i=0,\ldots,n$, is considered, convergence of the distributions follows, hence
there are constants $c>1$ and $\alpha>0$ such that for all $(x,i)\in C\times
E$ and Borel sets $A\subset C\times E$%
\begin{equation}
|\mathbb{P}_{x,i}[Z_{t}\in A]-\mu(A)|\leq ce^{-\alpha t},t\geq0. \label{a5}%
\end{equation}

\end{theorem}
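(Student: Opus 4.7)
The plan is to reduce both assertions to previously established results rather than redoing heavy analysis from scratch. For part (i), I would invoke the correspondence from \cite[Proposition 2.4 and Lemma 2.6]{BBMZ15} between invariant measures of the discrete-time chain $(\tilde{Z}_n)$ and those of the continuous-time process $(Z_t)$. That correspondence is a homeomorphism which preserves ergodicity and sends the support of a discrete-time invariant measure to the support of the corresponding continuous-time invariant measure. Applying it to Theorem~\ref{Theorem47}(i) immediately gives that every ergodic measure for $(Z_t)$ has support $C\times E$ for some compact invariant control set $C$; applying it to Theorem~\ref{Theorem47}(ii) yields both the existence of an ergodic measure supported on $C\times E$ for each compact invariant control set and the fact that any invariant measure with support contained in $C\times E$ has support equal to $C\times E$. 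Uniqueness under the full rank condition $\mathcal{LA}(F^0,\ldots,F^m)$ transfers from Theorem~\ref{Theorem47}(iii) in the same way.

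For part (ii) the strategy is to invoke the continuous-time exponential-mixing theorem from \cite{BBMZ15}, which operates under the weaker rank condition on the Lie algebra generated by the control vector fields $F^i-F^0$ together with their brackets against the $F^i$. That result is phrased in terms of the accessible set $\Gamma$ defined in \eqref{gamma}; to apply it I would take $M=C$ and observe that by hypothesis the only invariant control set inside $M$ is $C$ itself, so Proposition~\ref{proposition_gamma}(ii) yields $\Gamma=C$. With this identification in place, the hypotheses of the \cite{BBMZ15} continuous-time theorem are met on $C$, and the asserted exponential bound \eqref{a5} follows for all Borel sets $A\subset C\times E$.

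The main obstacle will be verifying cleanly that the hypotheses of the continuous-time \cite{BBMZ15} theorem are met on $M=C$; once the equality $\Gamma=C$ is secured through Proposition~\ref{proposition_gamma}(ii), the remaining work is essentially bookkeeping. Part (i), by contrast, is conceptually routine and depends entirely on the measure-theoretic correspondence between the two time scales, whose ergodicity- and support-preservation is the substantive input borrowed from \cite{BBMZ15}.
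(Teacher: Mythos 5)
Your proposal is correct and follows essentially the same route as the paper: part (i) by the support- and ergodicity-preserving homeomorphism between discrete- and continuous-time invariant measures from \cite[Proposition 2.4 and Lemma 2.6]{BBMZ15}, and part (ii) by the continuous-time convergence theorem \cite[Theorem 4.4]{BBMZ15}. Your extra step identifying $\Gamma=C$ via Proposition \ref{proposition_gamma} with $M=C$ is exactly the device the paper already uses in the proof of Theorem \ref{Theorem47}(iii), so it is a faithful (slightly more explicit) rendering of the intended argument.
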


\begin{proof}
(i) The analogues of assertions (i) and (ii) in Theorem \ref{Theorem47} hold,
since by \cite[Proposition 2.4]{BBMZ15} there is a homeomorphism between the
invariant measures of the discrete-time process and the continuous-time
process mapping ergodic measures onto ergodic measures; by \cite[Lemma
2.6]{BBMZ15} the homeomorphism preserves the supports.

(ii) The exponential convergence (\ref{a5}) follows by \cite[Theorem
4.4]{BBMZ15}.
\end{proof}

\textbf{Acknowledgements}. We are grateful to an anonymous reviewer for the
careful reading which helped to eliminate a number of inconsistencies and
errors. This work was supported by the SNF grants 200020 149871 and 200021
163072 and DFG grant Co 124/19-1.

\end{document}